\definecolor{gray75}{gray}{0.75}
\newcommand{\sln}{\linespread{1}}
\newcommand*{\email}[1]{\href{mailto:#1}{\nolinkurl{#1}} } 
\titleformat{\chapter}[block]{\LARGE\bfseries\sln}{Chapter \thechapter}{11pt}{\newline\huge\bfseries}
\newtheorem{thm}{Theorem}[section]
\newtheorem{rem}{Remark}[section]
\newtheorem{defn}{Definition}[section]
\newenvironment{proof}{\paragraph{Proof:}}{\hfill$\square$}
\newtheorem{proposition}{Proposition}[section]
\newtheorem{corollary}{Corollary}[section]
\begin{document}
\title{ On Minimal Surfaces of Revolutions Immersed in Deformed Hyperbolic Kropina Space}
\author{Ashok Kumar\footnote{E-mail: ashok241001@bhu.ac.in}, Ranadip Gangopadhyay\footnote{E-mail: ranadip.gangopadhyay1@bhu.ac.in}, and Bankteshwar Tiwari\footnote{E-mail: btiwari@bhu.ac.in}\\DST-CIMS, Banaras Hindu University, Varanasi-221005, India\\ Hemangi Madhusudan Shah\footnote{E-mail: hemangimshah@hri.res.in}\\ Harish-Chandra Research Institute, A CI of Homi Bhabha National Institute,\\ Chhatnag Road, Jhunsi, Prayagraj-211019, India.}
\maketitle
\begin{abstract}
In this paper we consider  three dimensional upper half space $\mathbb{H}^3 $ equipped with various Kropina metrics obtained by deformation of hyperbolic metric of $\mathbb{H}^3$ through $1$-forms and obtain a partial differential equation that characterizes minimal surfaces immersed in it. We prove that such minimal surfaces can only be obtained when the hyperbolic metric is deformed along $x^3$ direction. Then we classify such minimal surfaces and show that flag curvature of these surfaces is always non-positive. We also obtain the geodesics of this surface. In particular, it follows that such surfaces neither  have forward conjugate points nor they are forward complete.  
\end{abstract}
\textbf{keywords}: Kropina space, hyperbolic metric, minimal surface, surface of revolution.\hspace{0.5cm}

\par \textbf{ams subject classification 2020}: 53A10, 53B40
\section{Introduction}
The minimal surfaces in Riemannian geometry have been studied by several researchers extensively. Especially, the study of minimal surfaces in  hyperbolic spaces is an active area of research. Particularly, in case of $\mathbb{H}^3$ (upper half space with hyperbolic metric), many new and interesting results  have been obtained in the last few decades. Mori studied complete regular minimal surfaces of revolution in $\mathbb{H}^3$ and their global stability \cite{HM}. do Carmo classified all the complete regular minimal surfaces of revolution in $\mathbb{H}^3$ \cite{DCDM}. The investigation of  minimal surfaces in Finsler geometry, has been started  by Shen \cite{ZS2}. Shen has introduced the notion of mean curvature for immersions into Finsler manifolds and has established some of its properties. The immersion is said to be minimal if the mean curvature of an immersed submanifold of a Finsler manifold is identically zero. 
\par The Randers metric, defined  as $F=\alpha+\beta$, where $\alpha$ is a Riemannian metric and $\beta$ is a non zero $1$-form, is the simplest class of non-Riemannian Finsler metric. Minimal surfaces in a Randers space have been studied by several authors. Souza et. al., have studied the minimal surfaces of revolution and the surfaces defined by the graphs of functions in Minkowski space $\mathbb{R}^3$ with Randers metric; and have obtained some interesting results \cite{MK,MJK}. Minimal surfaces in various Randers spaces have been thoroughly investigated  by Geometers, see e.g., \cite{NC1,NC2,NC3,RK}. A class of  minimal surfaces with Matsumoto metrics and Kropina metrics have been recently investigated in \cite{RGBT} and \cite{RGAKBT} respectively.
\par It is well known that the Randers spaces of constant flag curvature have been classified by Bao et. al., using Zermelo Navigation problem \cite{BRS}. They prove that the solution  of a Zermelo's Navigation problem in a Riemannian manifold $(M,h)$ with a time independent vector field $W$ satisfying $h(W,W)<1$, gives a unique Randers metric on the manifold $M$ and conversely \cite{BRS}. It is an important research problem to know, to what extend Riemannian results can be extended to the Finslerian settings. The study of Randers spaces is a prime topic of research towards such questions.
 \par Another important class of Finsler metrics of $(\alpha,\beta)$ type is Kropina metrics, introduced by Kropina \cite{VK}. These metrics also have numerous applications in physics and biology \cite{PLA}. For a vector field $W$, satisfying $h(W,W)=1$, Yoshikawa and  Sabau have showed that the time minimizing path would be the geodesics of a Kropina metric which is given by $F=\frac{\alpha^2}{\beta}$, where $\alpha$ is a Riemannian metric and $\beta$ is a $1$-form \cite{YR}. They have also proved  that a Kropina space is of constant flag curvature if and only if its Zermelo's navigation data is a unit length Killing vector field on a constant sectional curvature Riemannian manifold. Interestingly, unlike the Randers case, only the odd dimensional spheres and the Euclidean plane admit Kropina metrics of constant flag curvature \cite{SVS}. Cheng et al., \cite{CMM}, have affirmed that the chains in CR geometry are geodesics of a certain Kropina metric constructed from CR structure. In \cite{RGAKBT}, authors study the minimal surfaces immersed in Kropina Minkowski space $\mathbb{R}^3$,  constructed as a slight deformation of Euclidean space by a $1$-form, and completely classify such surfaces under different immersions.

\par Let us denote $\mathbb{H}^3=\left\lbrace (\tilde{x}^1,\tilde{x}^2,\tilde{x}^3)\in \mathbb{R}^3 : \tilde{x}^3>0\right\rbrace $, i.e., upper half space with the hyperbolic metric,
\begin{equation}\label{eqn2.1.1.1}
\tilde{\alpha}=\frac{\sqrt{(d \tilde{x}^1)^2+(d \tilde{x}^2)^2+(d \tilde{x}^3)^2}}{\tilde{x}^3}.
\end{equation}
 In this paper, we study the minimality of surface of revolution as an immersion in  $\mathbb{H}^3$, with Kropina metric $\tilde{F}=\tilde{\alpha}^2/\tilde{\beta}$, where $\tilde{\alpha}$ is the hyperbolic metric and $\tilde{\beta}$ is a $1$-form. Among various deformations of hyperbolic metric by $1$-forms to obtain Kropina metric, we find that only when we consider deformation along $x^3$-axis the immersion is minimal. Indeed we classify all such immersions and study the geometric properties of such surfaces. More precisely, we obtain the following results:
\begin{thm}\label{thm2.1}
Let $(\mathbb{H}^3, \tilde{F}=\tilde{\alpha}^2/\tilde{\beta})$ be a Kropina space, where $\tilde{\alpha}$ is the hyperbolic metric given by  \eqref{eqn2.1.1.1} and $\tilde{\beta} = \frac{bd\tilde{x}^1}{(\tilde{x}^3)^2}$, ($b\ne 0$ is a constant), is a $1$-form. Let $\varphi :M^2 \subset \mathbb{R}^2 \to \mathbb{H}^3$, be an immersion given by
\begin{equation}\label{eqn2.1.1}
\varphi(x^1,x^2) = (f(x^1) \cos x^2, f(x^1) \sin x^2, x^1),
\end{equation}
where $f$ is a smooth positive real valued function defined on $\mathbb{R}_{>0}$. Then $\varphi$ can never be minimal.
 \end{thm}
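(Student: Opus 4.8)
The plan is to pull back the ambient Kropina metric to $M^{2}$ via $\varphi$, read off the induced $(\alpha,\beta)$-data, and substitute it into the minimal-surface equation. The obstruction is structural: the deformation one-form $\tilde\beta$ points along the $\tilde x^{1}$-axis and so breaks the rotational symmetry that both $\tilde\alpha$ and the surface $\varphi$ possess, which forces the minimal-surface equation to hold for every value of the rotation angle $x^{2}$ at once and thereby over-determines it.

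First I would compute $\varphi^{*}\tilde F=\varphi^{*}\tilde\alpha^{2}/\varphi^{*}\tilde\beta=:\alpha^{2}/\beta$ on $M^{2}$. Using $\tilde x^{1}\circ\varphi=f\cos x^{2}$, $\tilde x^{2}\circ\varphi=f\sin x^{2}$, $\tilde x^{3}\circ\varphi=x^{1}$ (and writing $f=f(x^{1})$), one obtains
\[
\alpha^{2}=\frac{\bigl(1+(f')^{2}\bigr)(dx^{1})^{2}+f^{2}(dx^{2})^{2}}{(x^{1})^{2}},\qquad
\beta=\frac{b\bigl(f'\cos x^{2}\,dx^{1}-f\sin x^{2}\,dx^{2}\bigr)}{(x^{1})^{2}}.
\]
Thus the Riemannian part $(a_{ij})$ is diagonal and independent of $x^{2}$, whereas $b_{1},b_{2}$ genuinely depend on $x^{2}$ through $\cos x^{2}$ and $\sin x^{2}$. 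I would then record the two scalars that enter the minimality condition, namely $\det(a_{ij})=f^{2}\bigl(1+(f')^{2}\bigr)/(x^{1})^{4}$ and
\[
\|\beta\|_{\alpha}^{2}:=a^{ij}b_{i}b_{j}=\frac{b^{2}\bigl((f')^{2}+\sin^{2}x^{2}\bigr)}{(x^{1})^{2}\bigl(1+(f')^{2}\bigr)}.
\]

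Next I would impose minimality. For a two-dimensional Kropina metric $\alpha^{2}/\beta$ the Busemann--Hausdorff area density equals $4\sqrt{\det(a_{ij})}/\|\beta\|_{\alpha}^{2}$, so the induced area of the surface is $A(\varphi)=\tfrac{4}{b^{2}}\iint f\,(1+(f')^{2})^{3/2}\bigl((f')^{2}+\sin^{2}x^{2}\bigr)^{-1}\,dx^{1}dx^{2}$, and by Shen's first-variation formula $\varphi$ is minimal only if it is a critical point of $A$; equivalently one may compute Shen's mean-curvature field of $\varphi$ directly and set it to zero. Varying through the surfaces with profile $f+t\eta$, with $\eta$ supported in the open set where $\varphi^{*}\tilde F$ is nondegenerate, criticality forces the Euler--Lagrange operator of $L=f(1+(f')^{2})^{3/2}\bigl((f')^{2}+\sin^{2}x^{2}\bigr)^{-1}$ to vanish identically in $(x^{1},x^{2})$. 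After clearing the denominator this becomes a polynomial identity in $\sin^{2}x^{2}$ of degree three whose coefficients are expressions in $f,f',f''$ and $x^{1}$, each of which must vanish for all $x^{1}>0$. This is an over-determined system for the single unknown $f$ — equivalently, specializing to $x^{2}=0$ and to $x^{2}=\tfrac{\pi}{2}$ already yields two different second-order ODEs that $f$ would simultaneously have to satisfy — and one checks it has no solution with $b\ne 0$ and $f>0$ smooth. Hence $\varphi$ is not even a critical point of $A$, so it can never be minimal.

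The step I anticipate as the main obstacle is assembling and reorganizing this equation: expanding the Euler--Lagrange operator (equivalently, Shen's mean curvature of the immersion) and rewriting it as a polynomial in $\sin^{2}x^{2}$ is a lengthy computation, and one must remember that $x^{1}$-differentiation acts only on $f,f',f''$ while $\sin^{2}x^{2}$ behaves as an independent parameter. Conceptually, though, the impossibility is robust and independent of the bookkeeping: vanishing of the mean curvature at each point $\varphi(x^{1},x^{2})$ is a distinct scalar constraint for every $x^{2}$, while only the profile $f(x^{1})$ is free to meet them, and the explicit coefficients confirm that no admissible $f$ can do so.
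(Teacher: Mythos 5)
Your setup is sound and your pullback data are correct: the induced $\alpha^{2}$, $\beta$, $\det(a_{ij})$ and $\|\beta\|_{\alpha}^{2}=\frac{b^{2}(f'^{2}+\sin^{2}x^{2})}{(x^{1})^{2}(1+f'^{2})}$ agree with the quantities $C^{2}$ and $E$ used in the paper, and the Busemann--Hausdorff density $\propto\sqrt{\det a}/\|\beta\|_{\alpha}^{2}$ is the right one (the constant is $2$, not $4$, but that is immaterial). The gap is in the variational reduction. If you vary only the profile, $\rho=f+t\eta$ with $\eta=\eta(x^{1})$, criticality gives you only the $x^{2}$-\emph{averaged} Euler--Lagrange equation $\int_{0}^{2\pi}\bigl(L_{f}-\tfrac{d}{dx^{1}}L_{f'}\bigr)dx^{2}=0$, so you are not entitled to read the condition as an identity in $\sin^{2}x^{2}$ and compare coefficients (nor to specialize to $x^{2}=0$ and $x^{2}=\pi/2$). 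To get a pointwise-in-$x^{2}$ condition you must allow $\eta=\eta(x^{1},x^{2})$; but then the varied surface is a general radial graph $\rho(x^{1},x^{2})$ and the area density $\mathcal{L}$ depends also on $\rho_{x^{2}}$ (through $a_{12}$, $a_{22}$ and $b_{2}$), so the correct pointwise equation is $\mathcal{L}_{\rho}-\tfrac{d}{dx^{1}}\mathcal{L}_{\rho_{x^{1}}}-\tfrac{d}{dx^{2}}\mathcal{L}_{\rho_{x^{2}}}=0$, not the Euler--Lagrange operator of your $L$ with $x^{2}$ treated as a parameter. The omitted term is genuinely nonzero here: at the symmetric configuration one finds
\begin{equation*}
\left.\frac{\partial\mathcal{L}}{\partial\rho_{x^{2}}}\right|_{\rho=f,\ \rho_{x^{2}}=0}
=\frac{4\,(1+f'^{2})^{3/2}\sin x^{2}\cos x^{2}}{b^{2}\,(f'^{2}+\sin^{2}x^{2})^{2}},
\end{equation*}
whose $x^{2}$-derivative does not vanish, so the two ODEs you would extract from the EL operator of $L$ are not the true constraints. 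In addition, the decisive step ``one checks it has no solution'' is asserted rather than carried out, and since your equation is not the minimality equation, that check cannot be completed as stated.

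The paper avoids this pitfall by working directly with Shen's pointwise mean curvature: it contracts the second-variation-type expression for $\mathcal{F}=2C^{3}/E$ with the normal field $v=(-\cos x^{2},-\sin x^{2},f'(x^{1}))$, obtaining an identity in $\cos^{2}x^{2}$ (equation \eqref{eqn2.3.44}); comparing coefficients yields $f f''+f'^{2}+1=0$ and $-f f''+3f'^{2}+3=0$, whose sum $4(1+f'^{2})=0$ is the explicit contradiction. Your plan can be repaired either by computing the full radial-graph Lagrangian $\mathcal{L}(x^{1},x^{2},\rho,\rho_{x^{1}},\rho_{x^{2}})$ and including the $\tfrac{d}{dx^{2}}\mathcal{L}_{\rho_{x^{2}}}$ term before separating powers of the trigonometric functions, or by computing the mean curvature pointwise as the paper does; as written, the reduction to an over-determined system in $f$ alone is not justified.
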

In the sequel we assume $f$ as in the Theorem \ref{thm2.1}.
\begin{thm}\label{thm2.2}
Let $(\mathbb{H}^3, \tilde{F}=\tilde{\alpha}^2/\tilde{\beta})$ be a Kropina space, where $\tilde{\alpha}$ is the hyperbolic metric given by  \eqref{eqn2.1.1.1} and $\tilde{\beta} = \frac{b(\cos \theta d\tilde{x}^1+\sin \theta d\tilde{x}^2)}{(\tilde{x}^3)^2}$, ($b\ne 0$,~$0\leq\theta<2\pi$ are  constants), is a $1$-form. Let $\varphi$ be given by  $\eqref{eqn2.1.1}$. Then $\varphi$ can not be minimal.
\end{thm}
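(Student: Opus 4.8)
The plan is to reduce Theorem~\ref{thm2.2} to the already established Theorem~\ref{thm2.1} by exploiting the rotational symmetry shared by the hyperbolic metric and by the surface of revolution. The key observation is that the deforming $1$-form $\tilde\beta_\theta=\frac{b(\cos\theta\,d\tilde x^1+\sin\theta\,d\tilde x^2)}{(\tilde x^3)^2}$ is, up to a linear change of Cartesian coordinates in the horizontal $\tilde x^1\tilde x^2$-plane, exactly the $1$-form $\tilde\beta=\frac{b\,d\tilde x^1}{(\tilde x^3)^2}$ appearing in Theorem~\ref{thm2.1}; since that change of coordinates is an isometry of $\tilde\alpha$ fixing the $\tilde x^3$-axis, it also carries the given surface of revolution to itself.

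Concretely, I would introduce the map $R_\theta:\mathbb{H}^3\to\mathbb{H}^3$ defined in coordinates by
\begin{equation*}
\tilde x^1=\cos\theta\,\bar x^1-\sin\theta\,\bar x^2,\qquad \tilde x^2=\sin\theta\,\bar x^1+\cos\theta\,\bar x^2,\qquad \tilde x^3=\bar x^3.
\end{equation*}
Because $(d\tilde x^1)^2+(d\tilde x^2)^2=(d\bar x^1)^2+(d\bar x^2)^2$ and $\tilde x^3=\bar x^3$, the map $R_\theta$ preserves $\tilde\alpha$, and a direct substitution gives $\cos\theta\,d\tilde x^1+\sin\theta\,d\tilde x^2=d\bar x^1$, so $R_\theta^*\tilde\beta_\theta=\frac{b\,d\bar x^1}{(\bar x^3)^2}$. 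Hence $R_\theta$ is an isometry of the Kropina structures carrying $(\mathbb{H}^3,\tilde\alpha^2/\tilde\beta_\theta)$ onto the space $(\mathbb{H}^3,\tilde\alpha^2/\tilde\beta)$ of Theorem~\ref{thm2.1}. Applying $R_\theta^{-1}$ to the immersion $\varphi$ of \eqref{eqn2.1.1} and using $f(x^1)(\cos\theta\cos x^2+\sin\theta\sin x^2)=f(x^1)\cos(x^2-\theta)$ together with the companion identity for the second coordinate, one sees that $R_\theta^{-1}\circ\varphi$ equals $(f(x^1)\cos(x^2-\theta),f(x^1)\sin(x^2-\theta),x^1)$, which after the harmless reparametrization $x^2\mapsto x^2+\theta$ of the domain is precisely the surface of revolution \eqref{eqn2.1.1} to which Theorem~\ref{thm2.1} applies.

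Since minimality (the vanishing of Shen's mean curvature) is preserved under isometries of the ambient Finsler manifold and under reparametrization of the immersed surface, $\varphi$ is minimal in $(\mathbb{H}^3,\tilde\alpha^2/\tilde\beta_\theta)$ if and only if $R_\theta^{-1}\circ\varphi$ is minimal in $(\mathbb{H}^3,\tilde\alpha^2/\tilde\beta)$. By Theorem~\ref{thm2.1} the latter is impossible, and therefore $\varphi$ can never be minimal, which is the assertion.

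The only delicate point — and what I would flag as the ``main obstacle,'' although it is routine — is checking that the object whose vanishing defines minimality is genuinely invariant under the Finsler isometry $R_\theta$ and under domain reparametrizations; this is immediate from the coordinate-free description of Shen's mean curvature. If one prefers to argue entirely through explicit formulas, an equivalent route is to repeat the computation that yields the minimal-surface PDE in the $\theta=0$ setting of Theorem~\ref{thm2.1} with $\tilde\beta_\theta$ in place of $\tilde\beta$, and observe that the substitution above turns it into literally the same equation, which has no admissible positive solution $f$.
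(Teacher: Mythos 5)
Your proof is correct, but it takes a genuinely different route from the paper. The paper proves Theorem \ref{thm2.2} by brute force: it recomputes the quantity $E$ for the rotated $1$-form (obtaining $E=b^2f^2(x^1)[f'^2(x^1)+\sin^2(x^2-\theta)]$, equation \eqref{eqn2.4.590}), redoes all the contractions with $v^i$ and $\varphi^j_{x^\epsilon x^\eta}$, and arrives at the identity \eqref{eqn2.4.68} in powers of $\cos^2(x^2-\theta)$, whose coefficients force $f'^2(x^1)+1=0$, a contradiction --- exactly the same endgame as Theorem \ref{thm2.1} but with $x^2$ replaced by $x^2-\theta$ throughout. You instead observe that the Euclidean rotation $R_\theta$ about the $\tilde x^3$-axis is an isometry of $\tilde\alpha$ pulling $\tilde\beta_\theta$ back to $b\,d\bar x^1/(\bar x^3)^2$, hence an isometry of the two Kropina structures, and that it carries the surface of revolution to itself up to the reparametrization $x^2\mapsto x^2+\theta$; invariance of Shen's mean curvature under ambient isometries and domain reparametrizations (clear from its variational, coordinate-free definition via the induced Busemann--Hausdorff volume, since $(R_\theta^{-1}\circ\varphi)^*(R_\theta^*\tilde F_\theta)=\varphi^*\tilde F_\theta$) then reduces the statement to Theorem \ref{thm2.1}. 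Your argument is shorter, conceptually transparent, and subsumes Corollary \ref{thm2} (the $d\tilde x^2$ case, $\theta=\pi/2$) for free; it can be made even more elementary by noting that the conformal factors cancel in $\tilde\alpha^2/\tilde\beta_\theta$, so the ambient metric is Minkowski and $R_\theta$ is just a linear isometry of Minkowski norms. What the paper's computation buys, by contrast, is the explicit minimality PDE \eqref{eqn2.4.68} exhibiting the $\theta$-dependence, in the same computational framework it then reuses for the genuinely new case $\tilde\beta=b\,d\tilde x^3/(\tilde x^3)^2$ of Theorem \ref{thm2.3}, where no such symmetry reduction is available. The one point you should state (and you did flag it) is the isometry-invariance of the minimality notion, which the paper never formulates; your justification via the variational definition is adequate.
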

\begin{thm}\label{thm2.3}
Let $(\mathbb{H}^3, \tilde{F}=\tilde{\alpha}^2/\tilde{\beta})$ be a Kropina space, where  $\tilde{\alpha}$ is the hyperbolic metric given by  \eqref{eqn2.1.1.1} and $\tilde{\beta} = \frac{bd\tilde{x}^3}{(\tilde{x}^3)^2}$, ($b\ne 0$ is a constant), is a $1$-form and  $\varphi$ given by   \eqref{eqn2.1.1}. Then $\varphi$ is minimal if and only if it an is an open cone with the generating line $f(x^1)=\dfrac{1}{\sqrt{2}}x^1+c$,  here $c$ being a real constant. Further, in local coordinates, the flag curvature and S-curvature at a point $x$ of the cone in direction of $y$ are given by
\begin{equation*}\label{eqn2.1.2}
 K(x,y)=\frac { -6b^2 (y^1)^6(y^2)^2}{\left[ 2(y^1)^2+(x^1)^2(y^2)^2\right]^4 } \qquad \textnormal{and} \qquad S(x,y)=\frac { -3x^1y^1(y^2)^2}{ 2(y^1)^2+(x^1)^2(y^2)^2 },
\end{equation*}
respectively. Here $x=(x^1,x^2)\in M$ and $y=(y^1,y^2)\in T_xM \setminus\left\lbrace 0\right\rbrace$.\\
Moreover, the equations of the geodesics starting from a point $x=(x^1,x^2)$ on the chart along the coordinate directions are given by 
\begin{equation}\label{eqn2.1.3}
x^1= c_1t+c_2,~ x^2=k_1 \qquad \textnormal{and} \qquad x^1= k_2,~ x^2=c_3t+c_4,
\end{equation}
where $c_1,c_2,c_3,c_4,k_1,k_2$ are real constants.
\end{thm}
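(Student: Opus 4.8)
The plan is to bring everything down to the two--dimensional parameter domain $M$ by pulling back $\tilde F$, and then to analyse a single ordinary differential equation together with a few curvature computations for the induced metric. The first step is to compute $\varphi^{*}\tilde F$. Since $\tilde x^{3}=x^{1}$ along $\varphi$, one gets $\varphi^{*}\tilde\alpha^{2}=\bigl(((f')^{2}+1)(y^{1})^{2}+f^{2}(y^{2})^{2}\bigr)/(x^{1})^{2}$ and $\varphi^{*}\tilde\beta=b\,y^{1}/(x^{1})^{2}$, so the conformal factor $(x^{1})^{-2}$ cancels in $\tilde\alpha^{2}/\tilde\beta$ and the Finsler metric induced on $M$ is the conic Kropina metric $F=\alpha^{2}/\beta$ with $\alpha^{2}=((f')^{2}+1)(y^{1})^{2}+f^{2}(y^{2})^{2}$ and $\beta=b\,dx^{1}$; note that this $\beta$ is closed.

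Next, I would feed $\tilde\beta=b\,d\tilde x^{3}/(\tilde x^{3})^{2}$ and the immersion \eqref{eqn2.1.1} into the minimal--surface PDE established earlier in the paper; all dependence on $x^{2}$ drops out and one is left with an ODE for $f$. (Equivalently, the Busemann--Hausdorff area element of $F$ is a constant multiple of $f\,((f')^{2}+1)^{3/2}\,dx^{1}\wedge dx^{2}$, so minimality is the Euler--Lagrange equation of $\int f\,((f')^{2}+1)^{3/2}\,dx^{1}$.) As the integrand carries no explicit $x^{1}$, the Beltrami first integral gives that $f\,(1-2(f')^{2})\sqrt{(f')^{2}+1}$ is constant; I would then check that, for a positive $f$ on $\mathbb R_{>0}$, the only admissible branch is $1-2(f')^{2}\equiv 0$, i.e. $f(x^{1})=\tfrac1{\sqrt2}x^{1}+c$. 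Read back through $\varphi$, the generating curve $\tilde\rho=\tfrac1{\sqrt2}\tilde x^{3}+c$ is a straight half--line, so the image is precisely an open cone with apex on $\partial\mathbb H^{3}$, which proves the first assertion.

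For the remaining assertions I fix $f(x^{1})=\tfrac1{\sqrt2}x^{1}+c$. The induced metric is then a conic Kropina metric whose Riemannian part is flat --- as one expects for a cone --- and whose $1$-form is closed, so all the relevant data are explicit. I would compute the geodesic spray coefficients $G^{i}$ of $F$, obtain the flag curvature from the Riemann curvature of the spray (or from the general flag--curvature formula for $(\alpha,\beta)$-metrics, using that $\beta$ is closed), and simplify to the stated $K(x,y)$; its overall sign and the even powers in the numerator give $K\le 0$, with $K=0$ exactly along the coordinate directions. For the $S$-curvature I would plug the Busemann--Hausdorff volume density of $F$ into the standard formula expressing $S$ through the density and the spray, and simplify. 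For the geodesics, inserting $\dot x=(y^{1},0)$ and $\dot x=(0,y^{2})$ into $\ddot x^{i}+2G^{i}(x,\dot x)=0$ makes the spray terms vanish, giving the two families in \eqref{eqn2.1.3}. Finally, $K\le 0$ forbids forward conjugate points through the Finslerian Jacobi equation, while the conservation of $F$ along geodesics confines $\dot x$ to a fixed indicatrix ellipse that meets the singular direction $y^{1}=0$ (equivalently, the boundary $\tilde x^{3}=0$), so a maximally extended forward geodesic has finite length and the cone is not forward complete.

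The main obstacle is the classification step: the minimality condition is a priori of second order, so the real content is to show that on $\mathbb R_{>0}$ its only positive solutions are the affine ones --- equivalently, that no non--conical minimal surface of revolution can occur --- which is exactly what the analysis of the Beltrami first integral must deliver. By contrast, the flag curvature, $S$-curvature and geodesic computations are long but entirely mechanical once the induced metric has been identified.
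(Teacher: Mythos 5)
Your reduction to the parameter domain and the variational route are sound and genuinely different in style from the paper's argument: the paper grinds through Shen's mean-curvature formula, contracting the derivatives of $C=\sqrt{\det A}$ and of $E$ with the normal vector $v$ to arrive at the ODE \eqref{eqn2.5.1111}, whereas you observe that the conformal factor cancels in the pullback, that the Busemann--Hausdorff density is a constant multiple of $f\,(1+f'^2)^{3/2}$, and that minimality is the Euler--Lagrange equation of $\int f\,(1+f'^2)^{3/2}\,dx^1$. This shortcut is legitimate here (the ambient data and the immersion are rotationally symmetric and the density has no explicit dependence on position, so the mean curvature is independent of $x^2$ and testing with rotationally invariant variations detects it), and the Euler--Lagrange equation of your functional is exactly \eqref{eqn2.5.1111}; the Beltrami integral $f\,(1-2f'^2)\sqrt{1+f'^2}=k$ is also correct. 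Up to this point your argument is a cleaner derivation of the same equation the paper obtains.

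The gap is the classification step, which you rightly call the main obstacle but then dispose of with ``the only admissible branch is $1-2(f')^2\equiv 0$''. That is not justified, and as stated it appears to fail: solving \eqref{eqn2.5.1111} for $f''$ gives $f''=\frac{(1-2f'^2)(1+f'^2)}{3f(1+2f'^2)}$, so initial data $f(x_0)=k>0$, $f'(x_0)=0$ produce a solution with Beltrami constant $k\neq 0$ which is convex, satisfies $f\ge k$ and $|f'|<1/\sqrt{2}$ on its maximal interval, and hence extends to all of $\mathbb{R}_{>0}$ as a smooth, positive, non-affine solution; nothing in the hypotheses (positivity and smoothness of $f$ on $\mathbb{R}_{>0}$, $\beta\neq 0$) excludes it. So the first integral alone cannot deliver $f=x^1/\sqrt{2}+c$; an additional argument or hypothesis is needed. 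To be fair, the paper itself merely asserts at this point that \eqref{eqn2.5.1111} ``has only one real solution'', so your proposal stalls exactly where the paper is weakest, but it remains a genuine gap in your proof. Separately, the theorem also asserts explicit formulas for $K$, $S$ and the coordinate geodesics \eqref{eqn2.1.3}; you only outline the procedure, whereas the paper actually writes down the induced metric for the affine profile and computes its fundamental tensor, spray coefficients, Riemann curvature, flag curvature, $S$-curvature and geodesic equations. Those computations are mechanical, as you say, but they must be carried out (and your parenthetical claims about where $K$ vanishes and about forward incompleteness belong to the corollaries and remarks, not to what this theorem requires).
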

From the above theorem we obtain the following corollaries immediately:
\begin{corollary}\cite{DSZ}
The flag curvature is non-positive for all directions on the surface and hence the surface has no forward conjugate points with respect to the given Kropina metric. 
\end{corollary}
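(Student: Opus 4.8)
The plan is to obtain both assertions as immediate consequences of Theorem~\ref{thm2.3}. For the non-positivity of the flag curvature I would simply analyse the sign of the closed-form expression established there, namely
\begin{equation*}
K(x,y)=\frac{-6b^2(y^1)^6(y^2)^2}{\left[2(y^1)^2+(x^1)^2(y^2)^2\right]^4}
\end{equation*}
for $x=(x^1,x^2)\in M$ and $y=(y^1,y^2)\in T_xM\setminus\{0\}$. Its numerator is $-6b^2$ multiplied by the manifestly nonnegative quantities $(y^1)^6$ and $(y^2)^2$, hence is $\le 0$; its denominator is strictly positive, since $2(y^1)^2+(x^1)^2(y^2)^2=0$ would force $y^1=0$ and $x^1y^2=0$, and because $x^1>0$ on $\mathbb{H}^3$ this would in turn give $y^2=0$, contradicting $y\ne 0$. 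Hence $K(x,y)\le 0$ for every point $x$ and every direction $y$.

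For the statement about conjugate points I would invoke the Finslerian index-form / Jacobi-field comparison in the form recorded in \cite{DSZ}: along a geodesic whose flag curvature is nowhere positive, the index form of every proper variation is nonnegative, so there is no nontrivial Jacobi field vanishing at two parameter values, i.e.\ no forward conjugate points. Spelled out, one would argue by contradiction: if $\gamma(\ell)$ were forward-conjugate to $\gamma(0)$ along a unit-speed geodesic $\gamma\colon[0,\ell]\to M$, there would be a nontrivial Jacobi field $J$ along $\gamma$, normal to $\dot\gamma$, with $J(0)=J(\ell)=0$, and then
\begin{equation*}
0=I(J,J)=\int_0^\ell\Big(g_{\dot\gamma}\big(D_{\dot\gamma}J,D_{\dot\gamma}J\big)-K(\dot\gamma,J)\,g_{\dot\gamma}(J,J)\Big)\,dt\ge 0,
\end{equation*}
where the inequality uses $K\le 0$; this forces $D_{\dot\gamma}J\equiv 0$ and hence, with $J(0)=0$, $J\equiv 0$, a contradiction.

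I do not expect any genuine obstacle here: the substantive computation — the flag-curvature formula itself — is already supplied by Theorem~\ref{thm2.3}, and what remains is the elementary sign analysis above together with a direct appeal to the comparison result of \cite{DSZ}. The only point warranting a line of care is the standard reduction to Jacobi fields normal to $\dot\gamma$ (the tangential component being irrelevant to conjugacy), which is precisely what lets the scalar flag curvature control the curvature term appearing in the index form.
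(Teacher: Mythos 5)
Your proposal is correct and matches the paper's intent exactly: the paper treats the corollary as an immediate consequence of the flag-curvature formula in Theorem~\ref{thm2.3} (whose numerator $-6b^2(y^1)^6(y^2)^2$ is manifestly nonpositive and whose denominator is positive) together with the standard Cartan--Hadamard/index-form result from \cite{DSZ} ruling out forward conjugate points when $K\le 0$. Your sign analysis and appeal to the Jacobi-field comparison are precisely the argument the paper leaves implicit.
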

\begin{corollary}
From the equations of geodesics obtained in Theorem \ref{thm2.3}, it is interesting to note that the profile curve, i.e., the generating line of the cone, as well as circles orthogonal to the axis of the cone are geodesics with respect to the Kropina metric. Although it is well known that such circles are not geodesics with respect to the metric on the cone induced from the standard Euclidean metric.
\end{corollary}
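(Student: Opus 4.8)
The plan is to deduce both assertions directly from the geodesic equations \eqref{eqn2.1.3} of Theorem \ref{thm2.3} by identifying the coordinate curves of the chart $(x^1,x^2)$ with the two distinguished families on the cone, and then to verify the Euclidean contrast by a short first-fundamental-form computation. First I would recall that, by Theorem \ref{thm2.3}, the minimal surface is the open cone $\varphi(x^1,x^2)=(f(x^1)\cos x^2, f(x^1)\sin x^2, x^1)$ with $f(x^1)=\tfrac{1}{\sqrt2}x^1+c$, a surface of revolution about the $\tilde{x}^3$-axis. Holding $x^2=k_1$ fixed and letting $x^1$ vary traces the generating line, i.e. the profile curve, while holding $x^1=k_2$ fixed and letting $x^2$ range over $[0,2\pi)$ traces the circle of radius $f(k_2)$ at height $\tilde{x}^3=k_2$, i.e. a circle orthogonal to the axis.

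Next I would match these coordinate curves with \eqref{eqn2.1.3}. The first family $x^1=c_1t+c_2,\ x^2=k_1$ is precisely an affine reparametrization of the meridian $x^2=k_1$, so the profile curve is a Kropina geodesic; the second family $x^1=k_2,\ x^2=c_3t+c_4$ is an affine reparametrization of the parallel $x^1=k_2$, so the axis-orthogonal circles are Kropina geodesics. Since Theorem \ref{thm2.3} already exhibits \eqref{eqn2.1.3} as solutions of the geodesic (spray) equations of the induced Kropina metric, both claims follow immediately, with no further Finslerian computation required.

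The only substantive point is the contrast with the Euclidean-induced metric. For this I would compute the first fundamental form induced on the cone by the ambient Euclidean metric of $\mathbb{R}^3$, which takes the surface-of-revolution form $E\,(dx^1)^2+f^2\,(dx^2)^2$ with $E=1+(f')^2$ constant and $f=f(x^1)$. For such a metric the parallel $x^1=\textnormal{const}$ is a geodesic if and only if $\frac{d}{dx^1}(f^2)=2ff'=0$; since $f>0$ and $f'=\tfrac{1}{\sqrt2}\neq 0$, this never holds, whereas every meridian $x^2=\textnormal{const}$ is automatically a geodesic. Equivalently, developing the cone isometrically onto a planar sector turns the parallels into circular arcs of nonzero geodesic curvature. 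This confirms that the axis-orthogonal circles fail to be geodesics of the Euclidean-induced metric, in contrast with the Kropina case.

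I do not expect a genuine obstacle here: the first two assertions are a direct reading of \eqref{eqn2.1.3}, and the Euclidean statement is the classical fact that parallels on a surface of revolution are geodesics only at critical points of the profile radius. The only care needed is to present the affine parametrizations so that the coordinate curves are exhibited as actual solutions of the geodesic equations, and not merely as point sets.
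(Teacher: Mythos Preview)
Your proposal is correct and matches the paper's approach: the paper states this corollary as an immediate consequence of Theorem \ref{thm2.3} with no separate proof, and your argument is precisely the intended reading---identify the two families in \eqref{eqn2.1.3} with the meridians and parallels of the revolution chart. Your additional verification of the Euclidean contrast via the first fundamental form (parallels are Riemannian geodesics only where $f'=0$) supplies what the paper leaves as ``well known''; this is a welcome elaboration but not a different method.
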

Following remarks are in order:
\begin{rem}\textnormal{
\begin{enumerate}
\item The surface obtained in Theorem \ref{thm2.3} is not forward complete, because the vertex is singular point and hence the coordinate  geodesics \eqref{eqn2.1.3} could not be defined for all time. 
\item From the expression of flag curvature in Theorem \ref{thm2.3}, it can be observed that  the flag curvature is symmetric with respect to the direction,  i.e., $ K(x,y)= K(x,-y)$; eventhough the induced metric on the surface is not symmetric, i.e.,   $ F(x,y)\neq F(x,-y)$.  
\end{enumerate}}
\end{rem}
\section{Preliminaries}
 Let $ M $ be an $n$-dimensional smooth manifold. $T_{x}M$ denotes the tangent space of $M$
 at $x$. The tangent bundle of $ M $ is the disjoint union of tangent spaces $ TM:= \sqcup _{x \in M}T_xM $. We denote the elements of $TM$ by $(x,y)$ where $y\in T_{x}M $ and $TM_0:=TM \setminus\left\lbrace 0\right\rbrace $.
 \begin{defn}
 \cite{SSZ} A Finsler metric on $M$ is a function $F:TM \to [0,\infty)$ satisfying the following:
 \\(i) $F$ is smooth on $TM_{0}$. 
 \\(ii) $F$ is a positively 1-homogeneous on the fibers of the tangent bundle $TM$, i.e., $F(\lambda y)=\lambda F(y)$,~ $\lambda >0$ and $y\in T_{x}M $.
 \\(iii) The Hessian of $\frac{F^2}{2}$ in the direction of $y$ i.e., $g_{ij}=\frac{1}{2}\frac{\partial ^2F^2}{\partial y^i \partial y^j}$, is positive definite on $TM_0$.\\
 The pair $(M,F)$ is called a Finsler space and $g=\left( g_{ij}\right) $ is called the fundamental tensor.
 \end{defn}
\par For an $n$-dimensional Finsler manifold $(M^n,F)$, the  Busemann-Hausdorff  volume form is defined as $dV_{BH}=\sigma_{BH}(x)dx$, where
    \begin{equation}\label{eqn2.2.4}
    \sigma_{BH}(x)=\frac{vol(B^n(1))}{vol\left\lbrace (y^i)\in T_xM : F(x,y)< 1 \right\rbrace },
    \end{equation}
    $B^n(1)$ is the Euclidean unit ball in $\mathbb{R}^n$ and $vol$ is the Euclidean volume.
      \begin{proposition}\label{prop1}\cite{XZ}
      Let $F=\alpha\phi(s)$, $s=\beta/\alpha$, be an $(\alpha,\beta)$-metric on an $n$-dimensional manifold $M$. Then the Busemann-Hausdorff volume form $dV_{BH}$ of the $(\alpha,\beta)$-metric $F$ is given by
      \begin{equation}\label{eqn2.2.401}
      dV_{BH}=\frac{\int\limits_{0}^{\pi}\sin^{n-2}(t)dt}{\int\limits_{0}^{\pi}\frac{\sin^{n-2}(t)}{\phi(b\cos (t))^n}dt} dV_{\alpha},
      \end{equation} 
      where, $dV_{\alpha}=\sqrt{det(a_{ij})}dx$ denotes the volume form of the Riemannian metric  $\alpha$.
      \end{proposition}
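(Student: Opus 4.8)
The plan is to evaluate, pointwise in $x$, the Euclidean volume of the indicatrix body $I_x:=\{y\in T_xM:\ F(x,y)<1\}$ occurring in \eqref{eqn2.2.4} and to compare it with $\mathrm{vol}(B^n(1))$. Fix $x\in M$ and put $b:=\|\beta_x\|_{\alpha}$. Since $a_x$ is a Euclidean inner product on $T_xM$, there is a linear change of fibre coordinates $y\mapsto\bar y$ carrying the chart frame $\partial/\partial x^i$ to an $a_x$-orthonormal frame whose first vector is the $a_x$-dual of $\beta_x$; in the adapted coordinates one has $\alpha_x(y)=|\bar y|$ (Euclidean length) and $\beta_x(y)=b\,\bar y^1$, hence $s=\beta/\alpha=b\,\bar y^1/|\bar y|$. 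If $\bar y=By$ then $a_{ij}=(B^{T}B)_{ij}$, so the Jacobian of this change has absolute value $\sqrt{\det(a_{ij}(x))}$; consequently Euclidean volumes computed in the chart coordinates equal $1/\sqrt{\det(a_{ij}(x))}$ times the corresponding volumes in the $\bar y$-coordinates.

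Next I would pass to polar coordinates $\bar y=r\omega$, $r>0$, $\omega\in S^{n-1}\subset\mathbb{R}^n$. Using positive $1$-homogeneity of $F$ together with positivity of $\phi$ on $[-b,b]$ — part of the condition ensuring $F=\alpha\phi(\beta/\alpha)$ is a Finsler metric, and in particular that $I_x$ is a bounded domain containing the origin — the inequality $F(x,y)<1$ reads $r\,\phi(b\,\omega^1)<1$, i.e. $r<1/\phi(b\,\omega^1)$. Hence
\begin{equation*}
\mathrm{vol}_{\bar y}(I_x)=\int_{S^{n-1}}\int_0^{1/\phi(b\omega^1)}r^{n-1}\,dr\,d\sigma(\omega)=\frac1n\int_{S^{n-1}}\frac{d\sigma(\omega)}{\phi(b\,\omega^1)^n}.
\end{equation*}
Slicing $S^{n-1}$ by the latitude sets $\omega^1=\cos t$, $t\in[0,\pi]$, each of which is a round $(n-2)$-sphere of radius $\sin t$ so that $d\sigma(\omega)=\sin^{n-2}(t)\,dt\,d\sigma_{n-2}$, the integral factors as $\mathrm{vol}(S^{n-2})\int_0^{\pi}\sin^{n-2}(t)\,\phi(b\cos t)^{-n}\,dt$. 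Applying the same slicing to the unit ball gives $\mathrm{vol}(B^n(1))=\tfrac1n\,\mathrm{vol}(S^{n-1})=\tfrac1n\,\mathrm{vol}(S^{n-2})\int_0^{\pi}\sin^{n-2}(t)\,dt$.

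Finally I would substitute into \eqref{eqn2.2.4}: reinstating the Jacobian factor, $\mathrm{vol}\{(y^i):F(x,y)<1\}=\sqrt{\det(a_{ij}(x))}^{-1}\,\mathrm{vol}_{\bar y}(I_x)$, so the common factor $\tfrac1n\,\mathrm{vol}(S^{n-2})$ cancels in the quotient and
\begin{equation*}
\sigma_{BH}(x)=\sqrt{\det(a_{ij}(x))}\;\frac{\int_0^{\pi}\sin^{n-2}(t)\,dt}{\int_0^{\pi}\sin^{n-2}(t)\,\phi(b\cos t)^{-n}\,dt},
\end{equation*}
which is exactly \eqref{eqn2.2.401}, since $dV_{BH}=\sigma_{BH}(x)\,dx$ and $dV_{\alpha}=\sqrt{\det(a_{ij})}\,dx$. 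The only genuinely delicate point is the very first one, namely knowing a priori that $I_x$ is a bounded strictly convex body with the origin in its interior so that the polar integral converges to a finite positive number; this is precisely guaranteed by the standard algebraic restriction on $\phi$ (positivity of $\phi$ and of $\phi-s\phi'+(b^2-s^2)\phi''$ on $[-b,b]$) under which $\alpha\phi(\beta/\alpha)$ is a bona fide Finsler metric, after which everything reduces to the elementary change of variables above.
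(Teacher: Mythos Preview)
Your argument is correct and is in fact the standard derivation of the Busemann--Hausdorff volume form for $(\alpha,\beta)$-metrics: orthonormalize $\alpha$ so that $\beta$ points along the first axis, pass to polar coordinates, and slice the sphere by the height $\omega^1=\cos t$. There is nothing to compare against, however, because the paper does not supply its own proof of this proposition; it is quoted from \cite{XZ} and used as a black box. Your write-up would serve perfectly well as a self-contained justification, with the one caveat that the Jacobian bookkeeping should be stated a bit more carefully: if $\bar y=By$ carries the coordinate frame to an $a_x$-orthonormal frame, then $B^{T}B=(a_{ij})^{-1}$ (not $(a_{ij})$), so $|\det B|=1/\sqrt{\det(a_{ij})}$ and hence $\mathrm{vol}_y(I_x)=|\det B|^{-1}\,\mathrm{vol}_{\bar y}(I_x)=\sqrt{\det(a_{ij})}\,\mathrm{vol}_{\bar y}(I_x)$, which is exactly the factor you need in the denominator of \eqref{eqn2.2.4} to produce $dV_\alpha$. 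The final formula is unaffected.
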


\par Let $( \tilde{M}^m, \tilde{F})$ be a Finsler manifold, with local coordinates $(\tilde{x}^1, \dots, \tilde{x}^m)$ and 
$\varphi : M^n \to (\tilde{M}^m, \tilde{F})$ be an immersion. Then $\tilde{F}$ induces a Finsler metric $F$
on $M$, defined by
\begin{equation}\label{eqn2.2.5}
F(x,y)=\left( \varphi^*\tilde{F}\right) (x,y)=\tilde{F}\left( \varphi(x),\varphi_*(y)\right) ,\quad \forall (x,y)\in TM.
\end{equation} 
\par The following convention is in order: the greek letters $\epsilon, \eta, \gamma, \tau, \dots$ are the indices ranging from $1$ to $n$ and the latin letters $i,j,k,l,\dots$ are the indices ranging from $1$ to $n+1$.
 \par Let $M^n$ has local coordinates $x = (x^{\epsilon})$, and
$\varphi(x) =\left(  \varphi^i(x^{\epsilon})\right)\in \tilde{M}^{n+1} $. Let
\begin{equation}\label{eqn2.2.7}
D^n_x(1)=\left\lbrace (y^1,y^2,...,y^n)\in T_xM^n:F(x,y)<1\right\rbrace, \quad y= y^{\epsilon}z^i_{\epsilon} \quad \textnormal{and} \quad z=\left( z^i_{\epsilon}\right)=\left( \frac{\partial \varphi^i}{\partial x^{\epsilon}}\right). 
\end{equation}
Following the notations by Shen \cite{ZS2}, if we denote the volume form  on $M^n$ as
\begin{equation}\label{eqn2.2.6}
\mathcal{F}(x,z)=\frac{vol (B^n(1))}{vol (D^n_x(1))}.
\end{equation}
 Then mean curvature $\mathcal{H}_{\varphi}$, for the immersion $\varphi$ along the vector $v$, is given by
 \begin{equation}\label{eqn2.1.8}
 \mathcal{H}_{\varphi}(v)=\frac{1}{\mathcal{F}}\left\lbrace \frac{\partial^2 \mathcal{F}}{\partial z^i_{\epsilon}\partial z^j_{\eta}} \frac{\partial^2 \mathcal{\varphi}^j}{\partial x^{\epsilon}\partial x^{\eta}}+\frac{\partial^2 \mathcal{F}}{\partial z^i_{\epsilon}\partial \tilde{x}^j} \frac{\partial \mathcal{\varphi}^j}{\partial x^{\epsilon}} -\frac{\partial \mathcal{F}}{\partial \tilde{x}^i}\right\rbrace v^i,
 \end{equation}
 where $v=(v^i)$ is a vector field on $\mathbb{H}^{n+1}$.  $\mathcal{H}_{\varphi}(v)$ depends linearly on $v$ and the mean curvature vanishes on $\varphi_*(TM)$. Since, $(\mathbb{H}^{n+1} ,F)$ is a Minkowski space, therefore $F$ is independent from point $x$ and only depend on direction $y$. Hence, the expression of the mean curvature reduces to
 \begin{equation}\label{eqn2.2.9}
 \mathcal{H}_{\varphi}(v)=\frac{1}{\mathcal{F}}\left\lbrace \frac{\partial^2 \mathcal{F}}{\partial z^i_{\epsilon}\partial z^j_{\eta}} \frac{\partial^2 \mathcal{\varphi}^j}{\partial x^{\epsilon}\partial x^{\eta}}\right\rbrace v^i.
 \end{equation}
 The immersion $\varphi$ is said to be minimal when $\mathcal{H}_{\varphi}=0$.
 \par A smooth curve $\gamma$ in a Finsler manifold $M^n$ is a geodesic if and only if $\gamma(t)=(x^i(t))$ satisfies the differential equations:
  		 \begin{equation}\label{eqn2.1.13}
  		 \frac{d^2x^i(t)}{dt^2}+{G}^i\left(\gamma,\frac{d \gamma}{dt}\right)=0, \qquad 1 \le i \le n.
  		 \end{equation}
   Here $G^i=G^i(x,y)$ are local functions on $TM$ called spray coefficients defined by
  	 		 \begin{equation}\label{eqn2.1.14}
  	 		 {G}^i=\frac{1}{4}{g}^{i{\ell}}\left\{\left[{F}^{2}\right]_{x^ky^{\ell}}y^k-	\left[{F}^{2}\right]_{x^{\ell}}\right\}.
  	 		 \end{equation}
 
 \begin{defn}\label{def2}
 	\textnormal{The Riemann curvature	$R={{R}_y:T_xM \rightarrow T_xM}$, for a Finsler space $(M^n,F)$ is defined by
 ${R}_{y}(u)={R}^{i}_{k}(x,y)u^{k} \frac{\partial}{\partial x^i}$,
 	\ $u=u^k\frac{\partial}{\partial x^k}$,\ where ${R}^{i}_{k}={R}^{i}_{k}(x,y)$ denote the coefficients of the Riemann curvature of $F$ and are given by,
 	\begin{equation}\label{eqn2.2.10}
 			{R}^{i}_{k}=2\frac{\partial{G}^i}{\partial	x^k}-y^j\frac{\partial^2{G}^i}{\partial x^j\partial	y^k}+2{G}^j\frac{\partial^2 {G}^i}{\partial y^j\partial	y^k}-\frac{\partial{G}^i}{\partial y^j}\frac{\partial {G}^j}{\partial y^k}.
 	\end{equation}}
 	\end{defn}
 	\par	The flag curvature $K=K(x,y,P)$, generalizes the sectional curvature in Riemannian geometry to the Finsler geometry and does not depend on whether one is using the Berwald, the Chern or the Cartan connection.

 	\begin{defn}\textnormal{	For a tangent plane $P\subset T_xM$ containing a non-zero vector $y$, the flag curvature $K(x,y,P)$ with the pole vector $y$ is defined by
 	\begin{equation}\label{eqn2.1.11}
 	K(x,y,P) :=\frac{g_y(R_y(u), u)}{g_y(y, y)g_y(u, u)- g_y(y, u)g_y(y, u)},
 	\end{equation}
 	where $u\in P$ is such that $P=\text{span} \left\{ y,u\right\} $.\\
 	 If $K(x,y,P)=K(x,y)$, then the	Finsler metric $F$ is said to be of scalar flag curvature and if $K(x,y,P)=constant$, then the Finsler metric $F$ is said to be of constant flag curvature .}
 		\end{defn}
The relation between  flag curvature $K(x,y)$ and the Riemann curvature  $R^i_j$ of a Finsler metric $F$ is given by
 		 \begin{equation}\label{eqn2.1.12}
 		 R^i_j= K(x,y)\left\lbrace F^2 \delta^i_j-FF_{y^j}y^i\right\rbrace.
 		 \end{equation}
\par Further volume form $dV_F=\sigma_F(x) ~dx^1\wedge dx^2 \wedge ...\wedge dx^n$ on Finsler manifold $(M,F)$, the distortion $\tau_F$ is defined by 
\begin{equation*}
\tau _F(x,y) :=\ln \frac{\sqrt{\det(g_{ij}(x,y))}}{\sigma _F(x)}.
\end{equation*}
Now we define S-curvature of the Finsler manifold $(M,F)$ with respect to the volume form $dV_F$.
 		 	\begin{defn}
 				\textnormal{	For a vector $y\in T_xM\backslash \left\lbrace 0\right\rbrace$, let $\gamma=\gamma(t)$ be the geodesic with $\gamma(0)=x$ and $\dot{\gamma}(0)=y$.
 		Then the $S$-curvature of the Finsler metric $F$ is defined by 	
 		\begin{equation*}
 		S(x, y)=\frac{d}{dt}\left[ \tau_{F}\left(\gamma(t), \dot{\gamma}(t)\right) \right]|_{t=0}.
 		\end{equation*}}
 		\end{defn}
 		S-curvature of $F$ in  terms of spray coefficients is given by
 		 		\begin{equation}\label{eqn2.1.15}
 		 		S(x,y)=\frac{\partial G^m}{\partial y^m}-y^m\frac{\partial\left( \ln\sigma_F\right) }{\partial x^m},
 		 		\end{equation}
 		 		where $G^m$ are given by  \eqref{eqn2.1.14}.\\
Now, let $\mathbb{H}^3=\left\lbrace (\tilde{x}^1,\tilde{x}^2,...,\tilde{x}^{n+1})\in \mathbb{R}^{n+1} : \tilde{x}^{n+1}>0\right\rbrace $ with the hyperbolic metric $\tilde{\alpha}=\frac{\sqrt{\delta_{ij} d \tilde{x}^i d \tilde{x}^j}}{\tilde{x}^{n+1}}$. consider a deformation of   the hyperbolic metric $\tilde{\alpha}$ by a $1$-form $\tilde{\beta} =\frac{bl_id\tilde{x}^i}{(\tilde{x}^{n+1})^2}$, ($b\neq 0$ and $l_i \in \mathbb{R}$  are constants, $\sum\limits_{i=1}^{n}(l_i)^2 \neq 0$) to form a Kropina metric $\tilde{F}=\tilde{\alpha}^2/\tilde{\beta}$ on $\mathbb{H}^{n+1}$. It is important to note that Kropina metric is a conic Finsler metric and therefore the Kropina metric $\tilde{F}=\tilde{\alpha}^2/\tilde{\beta}$ is well defined on $\mathbb{H}^{n+1}$. Now we consider the pull-back of  $\tilde{F}$ on an immersed submanifold in $\mathbb{H}^{n+1}$. Thus we have
 		\begin{proposition}\label{ppn1}
 		Let $\varphi :M^n  \to (\mathbb{H}^{n+1}, \tilde{F}=\tilde{\alpha}^2/\tilde{\beta})$, where $\tilde{\alpha}=\frac{\sqrt{\delta_{ij}d\tilde{x}^i d\tilde{x}^j}}{\tilde{x}^{n+1}}$ is the hyperbolic metric on $\mathbb{H}^{n+1}$ and  $\tilde{\beta} =\frac{bl_id\tilde{x}^i}{(\tilde{x}^{n+1})^2}$, ($b\neq 0$ and $l_i \in \mathbb{R}$  are constants, $\sum\limits_{i=1}^{n}(l_i)^2 \neq 0$) be an immersion in a Kropina space with local coordinates $(\varphi^i(x^\epsilon))$. Then the pull back metric $F$ defined by \eqref{eqn2.2.5} is a Kropina metric on $M^n$.
 		\end{proposition}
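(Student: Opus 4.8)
The plan is to show that pull-back by an immersion carries the $(\alpha,\beta)$-data $(\tilde\alpha,\tilde\beta)$ to $(\alpha,\beta)$-data of the same Kropina type on $M^n$, since pull-back commutes with the fibrewise algebraic construction $\tilde F=\tilde\alpha^2/\tilde\beta$. For $(x,y)\in TM^n$ write $\varphi(x)=(\varphi^i(x^\epsilon))$ and $z^i_\epsilon=\partial\varphi^i/\partial x^\epsilon$, so that $\varphi_*y$ has components $z^i_\epsilon y^\epsilon$; then by \eqref{eqn2.2.5},
\begin{equation*}
F(x,y)=\tilde F(\varphi(x),\varphi_*y)=\frac{\tilde\alpha(\varphi(x),\varphi_*y)^2}{\tilde\beta(\varphi(x),\varphi_*y)}=\frac{(\varphi^*\tilde\alpha)^2(x,y)}{(\varphi^*\tilde\beta)(x,y)}.
\end{equation*}
So it suffices to prove that $\alpha:=\varphi^*\tilde\alpha$ is a Riemannian metric on $M^n$ and that $\beta:=\varphi^*\tilde\beta$ is a nowhere-vanishing $1$-form on $M^n$; the Kropina structure of $F=\alpha^2/\beta$ then follows, with its conic-Finsler regularity inherited from that of $\tilde F$ on $\mathbb{H}^{n+1}$.

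First I would compute $\alpha$ explicitly. Since $\tilde\alpha=\sqrt{\delta_{ij}\,d\tilde x^i d\tilde x^j}\,/\,\tilde x^{n+1}$ and the image of $\varphi$ lies in $\mathbb{H}^{n+1}$ (so $\varphi^{n+1}(x)>0$), one finds
\begin{equation*}
\alpha(x,y)=\frac{\sqrt{\delta_{ij}z^i_\epsilon z^j_\eta\,y^\epsilon y^\eta}}{\varphi^{n+1}(x)}=\sqrt{a_{\epsilon\eta}(x)\,y^\epsilon y^\eta},\qquad a_{\epsilon\eta}(x):=\frac{\delta_{ij}z^i_\epsilon(x)z^j_\eta(x)}{(\varphi^{n+1}(x))^2}.
\end{equation*}
The matrix $(a_{\epsilon\eta})$ is symmetric, and it is positive definite precisely because $\varphi$ is an immersion: the $(n+1)\times n$ Jacobian $z=(z^i_\epsilon)$ has rank $n$ at every point, so its columns are $n$ linearly independent vectors of $\mathbb{R}^{n+1}$ and $(\delta_{ij}z^i_\epsilon z^j_\eta)$ is their Gram matrix, hence positive definite; dividing by $(\varphi^{n+1})^2>0$ preserves this. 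Thus $\alpha$ is a genuine Riemannian metric on $M^n$.

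Next, $\beta=\varphi^*\tilde\beta$ is the $1$-form $b_\epsilon(x)\,dx^\epsilon$ with $b_\epsilon(x)=b\,l_i z^i_\epsilon(x)/(\varphi^{n+1}(x))^2$, so $F=\alpha^2/\beta$ is exactly an $(\alpha,\beta)$-metric of Kropina type, i.e.\ with $\phi(s)=1/s$. That $F$ is a conic Finsler metric on its domain $\{(x,y):\beta(x,y)>0\}$ is then automatic: its fundamental tensor at $(x,y)$ is the pull-back under $\varphi_*$ of that of $\tilde F$ at $(\varphi(x),\varphi_*y)$, hence positive definite since $\varphi_*$ is injective and $\tilde F$ is already a conic Finsler metric on $\mathbb{H}^{n+1}$. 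Equivalently, one may check the standard $(\alpha,\beta)$-admissibility inequalities $\phi>0$, $\phi-s\phi'>0$, $\phi-s\phi'+(\|\beta_x\|_\alpha^2-s^2)\phi''>0$, which for $\phi(s)=1/s$ and $s>0$ reduce to $1/s>0$, $2/s>0$, $2\|\beta_x\|_\alpha^2/s^3>0$ (here $\|\beta_x\|_\alpha$, the $\alpha$-norm of $\beta$, is unrelated to the constant $b$ in $\tilde\beta$) and so hold whenever $\beta_x\neq 0$.

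The one step that requires genuine care---and the one I expect to be the main obstacle---is the non-vanishing of $\beta$: the conic domain above is non-empty at $x$, and hence $F$ a bona fide conic metric there, exactly when $(b_\epsilon(x))\neq 0$, i.e.\ when the fixed direction $(l_i)$ is not orthogonal to $\varphi_*(T_xM)$ (Euclidean and hyperbolic orthogonality agreeing here by conformality). This is where the hypotheses $b\neq 0$ and $\sum_i(l_i)^2\neq 0$ enter, and for the surfaces of revolution \eqref{eqn2.1.1} appearing in Theorems \ref{thm2.1}--\ref{thm2.3} it can be verified by the direct coordinate computation of the $b_\epsilon$; I expect this bookkeeping, rather than the positive-definiteness of $\alpha$, to be the only delicate point of the proof.
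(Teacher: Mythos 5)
Your argument is correct, and its core is the same computation the paper makes: pull $\tilde F$ back through $\varphi$ and recognize the result as $\alpha^2/\beta$ with $\alpha$ a Riemannian metric and $\beta$ a $1$-form on $M^n$. The one structural difference is the choice of Kropina data: you keep the conformal factor and take $\alpha=\varphi^*\tilde\alpha$, $\beta=\varphi^*\tilde\beta$, whereas the paper uses the fact that $\tilde\beta$ carries the same factor $(\tilde x^{n+1})^{-2}$ as $\tilde\alpha^2$, cancels it, and writes $F=\dfrac{A_{\epsilon\delta}\,dx^{\epsilon}dx^{\delta}}{b\,l_i z^i_\eta\,dx^\eta}$ with $A_{\epsilon\delta}=\sum_i z^i_\epsilon z^i_\delta$ the Euclidean Gram matrix as in \eqref{eqn2.3.18}; the two representations define the same $F$ (Kropina data are only determined up to a common positive factor), but the factor-free one is what the paper uses in all subsequent computations and is what underlies treating $\tilde F$ as Minkowskian in \eqref{eqn2.2.9}. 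Beyond that, you verify more than the paper's proof does: positive definiteness of $\alpha$ via the rank-$n$ Gram-matrix argument, positive definiteness of the induced fundamental tensor on the conic domain, and, most importantly, the possible vanishing of $\varphi^*\tilde\beta$ at points where $(l_i)$ is orthogonal to $\varphi_*(T_xM)$, which the hypotheses $b\neq 0$ and $\sum_i (l_i)^2\neq 0$ alone do not exclude. The paper's proof is silent on these regularity points and simply asserts the Kropina form, so the ``delicate point'' you flag is a gap in the paper's exposition rather than in your argument; your plan to check nonvanishing of $\beta$ directly for the immersion \eqref{eqn2.1.1} is exactly what is needed (for instance, with $\tilde\beta\propto d\tilde x^1$ one gets $\varphi^*\tilde\beta=\frac{b}{(x^1)^2}\bigl(f'\cos x^2\,dx^1-f\sin x^2\,dx^2\bigr)$, which degenerates where $f'(x^1)=0$ and $\sin x^2=0$, a caveat the paper never records).
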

 		\begin{proof}
 		Let $\varphi:=(\varphi^i(x^\epsilon))$ be an immersion in the Kropina space $(\mathbb{H}^{n
 		+1},\tilde{F})$. Then for any tangent vector $v \in T_xM \setminus\left\lbrace 0\right\rbrace$, using  \eqref{eqn2.2.5} we obtain,  
       \begin{equation*}\label{eqn2.1.150}
 		 		F(v)=\left( \varphi^*(\tilde{F})\right)(v)=\tilde{F}\left(\varphi_*(v) \right) = \frac{\delta_{ij}\frac{\partial \phi^i}{\partial x^{\epsilon}}\frac{\partial \phi^j}{\partial x^{\delta}}dx^{\epsilon}dx^{\delta}}{b l_i\frac{\partial \phi^i}{\partial x^\eta} dx^\eta}=\frac{A_{\epsilon\delta}dx^{\epsilon}dx^{\delta}}{b l_i z^i_\eta dx^\eta},
 		 		\end{equation*}
 		 		where
 		 		\begin{equation}\label{eqn2.3.18}
 		 		 		 		  A=\left( A_{\epsilon \delta}\right)=\left( \sum\limits_{i=1}^{3}z^i_{\epsilon}z^i_{\delta}\right).
 		 		 		 		  \end{equation}
 		 		Hence, $F= \varphi^*(\tilde{F})$ is also a Kropina metric where, $\alpha^2=A_{\epsilon\delta}dx^{\epsilon}dx^{\delta}$, and  $\beta=b l_i z^i_\eta dx^\eta$. 		
 		\end{proof}\\
 Now we restrict our study to immersion in a three dimensional hyperbolic Kropina space $\mathbb{H}^3$.\\ 
 In what follows we will be using the following expressions:\\
 Let $\varphi :M^2  \to (\mathbb{H}^3, \tilde{F}=\tilde{\alpha}^2/\tilde{\beta})$ be as in \eqref{eqn2.1.1}, then
\begin{equation}\label{eqn2.3.28}
\begin{split}
z^i_{\epsilon}:=\frac{\partial \varphi^i}{\partial x^{\epsilon}}=\delta_{\epsilon 1}[\delta_{i1}f'(x^1)\cos x^2+\delta_{i2}f'(x^1)\sin x^2 +\delta_{i3}]\\+\delta_{\epsilon 2}[-\delta_{i1}f(x^1)\sin x^2+\delta_{i2}f(x^1)\cos x^2],
\end{split}
\end{equation}
and
\begin{equation}\label{eqn2.3.280}
\begin{split}
\varphi^j_{x^\epsilon x^\eta}=\left(\delta_{\epsilon 1}\delta_{\eta 2}+\delta_{\epsilon 2}\delta_{\eta 1}\right) f'(x^1)\left( -\delta_{j1}\sin(x^2)+\delta_{j2}\cos(x^2)\right) \\ +\left[ \delta_{\epsilon 1}\delta_{\eta 1}f''(x^1)-\delta_{\epsilon 2} \delta_{\eta 2} f(x^1)\right] \left( \delta_{j1} \cos(x^2)+\delta_{j2} \sin(x^2)\right). 
\end{split}
\end{equation}
In particular,$$ \varphi^3_{x^\epsilon x^\eta} =0;  ~ \forall \epsilon,\eta.$$ 
And
 		\begin{equation}\label{eqn2.3.201}
 		C=\sqrt{\det A}=\sqrt{\sum\limits_{k\neq l}^{}\left( z^k_1\right)^2\left( z^l_2\right)^2-\sum\limits_{k\neq l}^{} z^k_1 z^k_2z^l_1 z^l_2}.
 		\end{equation}
 		Differentiating $C$ with respect to $z^i_{\epsilon}$ we obtain,
 		\begin{equation}\label{eqn2.3.32}
 		\frac{\partial C}{\partial z^i_{\epsilon}}=\frac{1}{C} \sum\limits_{i\neq l}^{}\left( z^i_1 z^l_2- z^i_2 z^l_1\right)\left( \delta_{\epsilon 1}z^l_2-\delta_{\epsilon 2}z^l_1\right). 
 		\end{equation}
 		Hence,
 		\begin{equation}\label{eqn2.3.25}
 		\frac{\partial^2 C^2}{\partial z^i_{\epsilon}\partial z^j_{\eta}}=\frac{\partial}{\partial z^j_{\eta}}\left( 2C\frac{\partial C}{\partial z^i_{\epsilon}}\right)=2\frac{\partial C}{\partial  z^i_{\epsilon}}\frac{\partial C}{\partial z^j_{\eta}}+2C\frac{\partial^2 C}{\partial z^i_{\epsilon} \partial z^j_{\eta}}.
 		\end{equation}
 		Therefore,
 		\begin{equation}\label{eqn2.3.33}
 		\begin{split}
 		\frac{1}{2} \frac{\partial^2 C^2}{\partial z^i_{\epsilon}\partial z^j_{\eta}}= \sum\limits_{i\neq l}^{}\Big[ \delta_{ij}\left( \delta_{\eta 1}z^l_2-\delta_{\eta 2}z^l_1\right)+ \delta_{lj}\left( \delta_{\eta 2}z^i_1-\delta_{\eta 1}z^i_2\right) \Big] \left( \delta_{\epsilon 1}z^l_2-\delta_{\epsilon 2}z^l_1\right) \\  +\sum\limits_{i\neq l}^{}\left( z^i_1z^l_2-z^i_2 z^l_1\right)\delta_{jl} \left( \delta_{\epsilon 1} \delta_{\eta 2}-\delta_{\epsilon 2} \delta_{\eta 1}\right).  
 		\end{split}
 		\end{equation}
 \section{Main Results}
Consider the Kropina metric $\tilde{F}=\tilde{\alpha}^2/\tilde{\beta}$ in upper half space $\mathbb{H}^3$ where $\tilde{\alpha}$ is given by  \eqref{eqn2.1.1.1} and $\tilde{\beta} = \frac{bd\tilde{x}^1}{(\tilde{x}^3)^2}, ~(b\ne 0)$, i.e., a slight deformation of the hyperbolic metric along $\tilde{x}^1$-axis. Now we find the condition for an immersed surface in $\mathbb{H}^3$ to be minimal.
\begin{proposition}\label{P1}
Let $\varphi :M^2 \to (\mathbb{H}^3, \tilde{F}=\tilde{\alpha}^2/\tilde{\beta})$, where $\tilde{\alpha}$ is given by \eqref{eqn2.1.1.1} and  $\tilde{\beta} = \frac{bd\tilde{x}^1}{(\tilde{x}^3)^2}$, ($b\neq 0$ is a constant) be an immersion in a Kropina space with local coordinates $(\varphi^i(x^\epsilon))$. Then $\varphi$ is minimal if and only if 

\begin{equation} \label{eqn2.3.19}
\begin{split}
\frac{\partial^2 \varphi^j}{\partial x^{\epsilon}\partial x^{\eta}}v^i\Bigg[ -2C^2E\frac{\partial^2 E}{\partial z^i_{\epsilon}\partial z^j_{\eta}}+4C^2\frac{\partial E}{\partial z^i_{\epsilon}}\frac{\partial E}{\partial z^j_{\eta}}-6CE\left(\frac{\partial C}{\partial z^i_{\epsilon}}\frac{\partial E}{\partial z^j_{\eta}}+\frac{\partial E}{\partial z^i_{\epsilon}}\frac{\partial C}{\partial z^j_{\eta}} \right)  \\-6CE^2\frac{\partial C}{\partial z^i_{\epsilon}}\frac{\partial C}{\partial z^j_{\eta}}+3E^2\frac{\partial^2 C^2}{\partial z^i_{\epsilon}\partial z^j_{\eta}} & \Bigg]=0,
 \end{split}
\end{equation}
where 
\begin{equation}\label{eqn2.3.20}
E=b^2\sum\limits_{k=1}^{3}(-1)^{\gamma + \tau}z^{k}_{\tilde{\gamma}}z^{k}_{\tilde{\tau}}z^{1}_{\gamma}z^{1}_{\tau}, 
\end{equation}
and ~~ $\tilde{\tau}=\delta_{\tau 2}+2\delta_{\tau 1}$.
\end{proposition}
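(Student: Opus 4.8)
The plan is to compute the mean curvature functional $\mathcal{F}(x,z) = \operatorname{vol}(B^n(1))/\operatorname{vol}(D^n_x(1))$ explicitly for the pulled-back Kropina metric and then feed it into the mean curvature formula \eqref{eqn2.2.9}. First I would apply Proposition \ref{ppn1}: the pull-back metric is the Kropina metric $F = \alpha^2/\beta$ on $M^2$ with $\alpha^2 = A_{\epsilon\delta}dx^\epsilon dx^\delta$, $A = (A_{\epsilon\delta}) = (\sum_k z^k_\epsilon z^k_\delta)$, and $\beta = b\, z^1_\eta dx^\eta$ (since here $l_i = \delta_{i1}$). Writing $F = \alpha\phi(s)$ with $s = \beta/\alpha$ and $\phi(s) = 1/s$, I would use Proposition \ref{prop1} with $n = 2$ to get $dV_{BH} = \left(\int_0^\pi dt \big/ \int_0^\pi \phi(b_F\cos t)^{-2}\,dt\right)dV_\alpha$, where $b_F^2 = A^{\epsilon\delta}\beta_\epsilon\beta_\delta$ is the squared $\alpha$-norm of the $1$-form $\beta$ on $M$. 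With $\phi(s)^{-2} = s^2$ the denominator integral is $b_F^2\int_0^\pi\cos^2 t\, dt = \pi b_F^2/2$, so $\sigma_{BH}(x) = \mathcal{F}(x,z)$ up to a universal constant equals $\sqrt{\det A}\big/ b_F^2 = C / b_F^2$ where $C = \sqrt{\det A}$ as in \eqref{eqn2.3.201}. The key computation is then to identify $b_F^2 = A^{\epsilon\delta}\beta_\epsilon\beta_\delta$: since $\det A = C^2$ and the cofactor expression for $A^{\epsilon\delta}$ in two dimensions is explicit, $b_F^2 = \frac{1}{C^2}\big(A_{22}\beta_1^2 - 2A_{12}\beta_1\beta_2 + A_{11}\beta_2^2\big)\cdot b^2$, and a direct expansion using $\beta_\epsilon = b\, z^1_\epsilon$ shows the bracket equals exactly $E/b^2$ as defined in \eqref{eqn2.3.20} (the sign pattern $(-1)^{\gamma+\tau}$ with $\tilde\tau = \delta_{\tau 2} + 2\delta_{\tau 1}$ is precisely the two-by-two cofactor bookkeeping). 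Hence $\mathcal{F} = c\, C^3 / E$ for a constant $c$.

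Next I would substitute $\mathcal{F} = cC^3/E$ into \eqref{eqn2.2.9}. Minimality means $\frac{\partial^2\mathcal{F}}{\partial z^i_\epsilon \partial z^j_\eta}\,\varphi^j_{x^\epsilon x^\eta}\,v^i = 0$ for all $v$, and since the overall constant $c$ and any nonvanishing scalar prefactor are irrelevant, I would clear denominators. Writing $\mathcal{F} = c\,C^3 E^{-1}$, the first derivative is $\partial_{z^i_\epsilon}\mathcal{F} = c\big(3C^2 E^{-1}\partial_{z^i_\epsilon}C - C^3 E^{-2}\partial_{z^i_\epsilon}E\big)$, and differentiating once more and multiplying through by $E^3/(cC)$ (a nonzero factor, using that $\alpha$ is Riemannian so $C \ne 0$ and $E \ne 0$ on the relevant domain) produces exactly the bracketed expression in \eqref{eqn2.3.19}: the term $3E^2\,\partial^2_{z^i_\epsilon z^j_\eta}C^2$ comes from $6CE^2\,\partial^2 C$ plus $6E^2\,\partial C\,\partial C$ repackaged via \eqref{eqn2.3.25}, the $-2C^2 E\,\partial^2 E$ term from differentiating $-C^3E^{-2}\partial E$, the $+4C^2\,\partial E\,\partial E$ term from the $E^{-2}$ being differentiated, and the two cross terms $-6CE(\partial C\,\partial E + \partial E\,\partial C)$ from the mixed derivatives; contracting with $\varphi^j_{x^\epsilon x^\eta} v^i$ and setting to zero gives \eqref{eqn2.3.19}. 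I would present this bookkeeping as the routine but careful part, using \eqref{eqn2.3.25} to convert the $\partial^2 C^2$ terms as written.

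The main obstacle I expect is twofold. First, verifying that the cofactor/inverse-metric expression $A^{\epsilon\delta}\beta_\epsilon\beta_\delta$ genuinely simplifies to $E/(b^2 C^2)$ with the exact index conventions claimed — this requires unwinding the compact notation $\tilde\tau = \delta_{\tau 2} + 2\delta_{\tau 1}$ and the sign $(-1)^{\gamma+\tau}$, and confirming the sum over $k$ from $1$ to $3$ matches $\sum_k z^k_\epsilon z^k_\delta$ entries of $A$; a sign error here would propagate everywhere. Second, one must justify that no singular prefactor was divided out illegitimately: since the induced metric is Kropina, it is only a conic Finsler metric, but on the open set where $F$ is defined both $C > 0$ (as $\alpha$ is a genuine Riemannian norm, $\varphi$ being an immersion) and $E > 0$ (it is $b^2$ times the squared $\alpha^*$-norm of a nonzero $1$-form, hence positive), so multiplying \eqref{eqn2.2.9} by $E^3/(cC)$ is an equivalence, not merely an implication. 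Once these two points are nailed down, the proposition follows by reading off \eqref{eqn2.3.19} as the vanishing of $E^3/(cC)$ times the second $z$-derivative of $\mathcal{F}$ contracted against $\varphi^j_{x^\epsilon x^\eta}v^i$.
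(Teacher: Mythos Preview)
Your proposal is correct and follows essentially the same route as the paper: use Proposition~\ref{prop1} for the Kropina $(\alpha,\beta)$-metric with $\phi(s)=1/s$, $n=2$ to get $dV_{BH}=\frac{2}{b'^2}\sqrt{\det A}\,dx$, identify $b'^2=E/C^2$ via the cofactor expansion of $A^{\epsilon\delta}$ so that $\mathcal{F}=2C^3/E$, and then differentiate twice in $z$ and insert into \eqref{eqn2.2.9}. The only addition beyond the paper is your explicit justification that $C>0$ and $E>0$ on the conic domain, so that clearing the denominator $E^3/(cC)$ is a genuine equivalence; the paper leaves this implicit.
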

\begin{proof}
In view of Proposition \ref{prop1}, for a Kropina surface we have $\phi(s)=\frac{1}{s}$ and $n=2$. Therefore, 
\begin{equation}\label{eqn2.3.21}
\begin{split}
 dV_{BH} = \frac{\int\limits_{0}^{\pi}dt}{\int\limits_{0}^{\pi}(b'\cos t)^2dt}\sqrt{\det A} dx=\frac{2}{b'^2}\sqrt{\det A} dx.
\end{split}
\end{equation}
Here, $b'^2=b^2A^{\epsilon\eta}z^1_{\epsilon}z^1_{\eta} =\| \beta \|^2_\alpha$. Hence, the Euclidean volume of $D^2_x(1)$ is 
  \begin{equation}\label{eqn2.3.22}
  vol (D^2_x(1)):=\frac{vol (B^2(1))b^2A^{\epsilon\eta}z^1_{\epsilon}z^1_{\eta}}{2\sqrt{\det A}}.
  \end{equation}
Therefore, from \eqref{eqn2.2.6}, \eqref{eqn2.3.20} and  \eqref{eqn2.3.22}   we have 
\begin{equation}\label{eqn2.3.24}
\mathcal{F}(x,z)=\frac{2C^3}{E}.
\end{equation}
Now differentiating \eqref{eqn2.3.24} twice, first with respect to $z^i_{\epsilon}$, then  with respect to  $z^j_{\eta}$ successively, and using  \eqref{eqn2.3.25} yields  
\begin{equation}\label{eqn2.3.26}
\begin{split}
\frac{\partial^2 \mathcal{F}}{\partial z^i_{\epsilon}\partial z^j_{\eta}}
=\frac{4C^3}{E^3}\frac{\partial E}{\partial z^i_{\epsilon}}\frac{\partial E}{\partial z^j_{\eta}}-\frac{6C^2}{E^2}\left( \frac{\partial C}{\partial  z^j_{\eta} }\frac{\partial E}{ \partial z^i_{\epsilon}}+\frac{\partial E}{\partial  z^j_{\eta} }\frac{\partial C}{ \partial z^i_{\epsilon}}\right) -\frac{2C^3}{E^2}\frac{\partial^2 E}{\partial  z^i_{\epsilon}\partial  z^j_{\eta}}\\+\frac{6C}{E}\frac{\partial C}{\partial z^j_{\eta}}\frac{\partial C}{\partial z^i_{\epsilon}}+\frac{3C}{E}\frac{\partial^2 C^2}{\partial z^i_{\epsilon}\partial z^j_{\eta}}.
\end{split} 
\end{equation}
Using  \eqref{eqn2.3.26} in \eqref{eqn2.2.9} we conclude the proof of the proposition.
\end{proof}
\subsection*{Proof of Theorem \ref{thm2.1}}
\begin{proof}
Consider the surface of revolution given by  immersion $\varphi $ defined by \eqref{eqn2.1.1} with the generating curve $(f(x^1),0,x^1)$. As noted earlier, the mean curvature vanishes on tangent vectors of the immersion $\varphi$. Therefore, we only need to consider a vector field $v$
such that the set $\left\lbrace \varphi_{x^1}, \varphi_{x^2}, v \right\rbrace $ is linearly independent. Thus natural choice of $v$ is along  ${\varphi_{x^1}}\times {\varphi_{x^2}}$, hence we choose
\begin{equation}\label{eqn2.3.290}
v :=(v^1,v^2,v^3)=(-\cos x^2,-\sin x^2, f'(x^1)).
\end{equation}
Now differentiating \eqref{eqn2.3.20} with respect to $z^i_{\epsilon}$ and  with $z^j_{\eta}$ respectively, we get,
\begin{equation}\label{eqn2.3.34}
\frac{\partial E}{\partial z^i_{\epsilon}}=2b^2\Big[z^1_{\tilde{\epsilon}}(-1)^{{\tilde{\epsilon}}+\tau}z^i_{\tilde{\tau}}z^1_{\tau}+\delta_{i1}\sum\limits_{k}^{}(-1)^{\epsilon +\tau}z^1_{\tau}z^k_{\tilde{\tau}}z^k_{\tilde{\epsilon}}\Big],
\end{equation}
\begin{equation}\label{eqn2.3.35}
\begin{split}
\frac{\partial^2 E}{\partial z^i_{\epsilon}\partial z^j_{\eta}}=2b^2\Bigg[\delta_{j1}\delta_{\eta \tilde{\epsilon}}(-1)^{{\tilde{\epsilon}}+\tau}z^i_{\tilde{\tau}}z^1_{\tau}+z^1_{\tilde{\epsilon}}\left\lbrace(-1)^{{\tilde{\epsilon}}+{\tilde{\eta}}} \delta_{ij}z^1_{\tilde{\eta}}+(-1)^{{\tilde{\epsilon}}+\eta} \delta_{j1}z^i_{\tilde{\eta}}\right\rbrace \\ +\delta_{i1}\sum\limits_{k}^{}(-1)^{\epsilon+\eta}\delta_{j1}z^k_{\eta} z^k_{\tilde{\epsilon}}+\delta_{i1}(-1)^{\epsilon+{\tilde{\eta}}}z^1_{\tilde{\eta}}z^j_{\tilde{\epsilon}}+\delta_{i1}(-1)^{\epsilon+\tau} \delta_{\eta \tilde{\epsilon}}z^1_\tau z^j_{\tilde{\tau}}\Bigg].
\end{split}
\end{equation}
In view of \eqref{eqn2.3.18} and \eqref{eqn2.3.20} we  obtain the following:
\begin{equation}\label{eqn2.3.29}
A = 
\begin{pmatrix}
1+(f'(x^1))^2 & 0 \\
0 & (f(x^1))^2 \\
\end{pmatrix}.
\end{equation}
Therefore,
\begin{equation}\label{eqn2.3.30}
C^2=f^2(x^1)(1+f'^2(x^1)),
\end{equation}
\begin{equation}\label{eqn2.3.300}
 E=b^2f^2(x^1)\left[   f'^2(x^1)+ \sin^{2}\left( x^2\right)  \right].
\end{equation}
Contracting  \eqref{eqn2.3.32} and  \eqref{eqn2.3.34} by $v^i$ respectively, and using \eqref{eqn2.3.28}, we obtain
\begin{equation}\label{eqn2.3.36}
\frac{\partial C}{\partial z^i_{\epsilon}}v^i=0,
\end{equation}
\begin{equation}\label{eqn2.3.49}
\frac{\partial E}{\partial z^i_{\epsilon}}v^i= 2b^2f(x^1) \cos\left( x^2\right) \left[ -\delta_{\epsilon 1}f(x^1)f'(x^1) \cos\left( x^2\right) +\delta_{\epsilon 2}\left\lbrace 1 +f'^2 \left( x^1\right) \right\rbrace  \sin\left( x^2\right) \right].
\end{equation}
Using \eqref{eqn2.3.28}, \eqref{eqn2.3.280} together with  \eqref{eqn2.3.32} and  \eqref{eqn2.3.34} we have
\begin{equation}\label{eqn2.3.40}
\frac{\partial C}{\partial z^j_{\eta}}\frac{\partial^2\varphi^j} {\partial x^{\epsilon}\partial x^{\eta}}=\frac{f'(x^1)}{\sqrt{1+f'^2(x^1)}}\delta_{\epsilon 1}\left[ f(x^1)f''(x^1)+1+f'^2(x^1)\right],
\end{equation}
\begin{equation}\label{eqn2.3.41}
\begin{split}
\frac{\partial E}{\partial z^j_{\eta}}\frac{\partial^2\varphi^j} {\partial x^{\epsilon}\partial x^{\eta}}= 2b^2f(x^1)\Big[\delta_{\epsilon 1} f' \left( x^1\right) \left\lbrace f(x^1)f''\left( x^1\right)+f'^2(x^1)+\sin^2\left( x^2\right)\right\rbrace  +\delta_{\epsilon 2} f(x^1) \sin\left( x^2\right) \cos\left( x^2\right) \Big].
\end{split}
\end{equation}
Further, using  \eqref{eqn2.3.28}, \eqref{eqn2.3.280}, \eqref{eqn2.3.33}, \eqref{eqn2.3.290} and \eqref{eqn2.3.35} yields
\begin{equation}\label{eqn2.3.43}
\frac{1}{2}\frac{\partial^2 C^2}{\partial z^i_{\epsilon}\partial z^j_{\eta}}\frac{\partial^2\varphi^j} {\partial x^{\epsilon}\partial x^{\eta}}v^i= f(x^1)\left[ -f(x^1)f''(x^1)+1+f'^2(x^1)\right],
\end{equation}
\begin{eqnarray}\label{eqn2.3.42}
\frac{\partial^2 E}{\partial z^i_{\epsilon}\partial z^j_{\eta}}\frac{\partial^2\varphi^j} {\partial x^{\epsilon}\partial x^{\eta}}v^i=  -2b^2f(x^1)\left[ f(x^1)f''\left( x^1\right) -\cos^2\left( x^2\right)\right].
\end{eqnarray}

Using  \eqref{eqn2.3.30}-\eqref{eqn2.3.42} in  \eqref{eqn2.3.19}, and after some simplifications we get
\begin{equation}\label{eqn2.3.44}
\begin{split}
-3 \left( 1+2f'^2(x^1)\right) \left[  f\left( x^1\right)f''\left( x^1\right)+f'^2\left( x^1\right)+1\right] \cos^4\left( x^2\right)\\+2\left( 1+f'^2(x^1)\right) \left[  2f(x^1)f''(x^1)-f'^2(x^1)\left\lbrace f(x^1)f''(x^1)+f'^2(x^1)+1\right\rbrace\right] \cos^2\left( x^2\right)\\ +\left( 1+f'^2(x^1)\right)\left[ -f(x^1)f''(x^1)+3f'^2(x^1)+3\right]=0.
\end{split}
\end{equation}
The equation \eqref{eqn2.3.44} is an identity in $\cos^2\left( x^2\right) $ and therefore comparing the coefficients of different powers of $\cos^2\left( x^2\right) $  we obtain 
\begin{equation}\label{eqn2.3.45}
         f(x^1)f''(x^1)+f'^2(x^1)+1=0,
\end{equation}
and \begin{equation}\label{eqn2.3.46}
    -f(x^1)f''(x^1)+3f'^2(x^1)+3=0.
\end{equation}
Adding  \eqref{eqn2.3.45} and  \eqref{eqn2.3.46} we get
\begin{equation*}\label{eqn2.3.47}
4\left( f'^2(x^1)+1\right) =0.
\end{equation*}
Since $f'$ is a real valued function we get a contradiction.
\end{proof}
\par Following the strategy developed in the proof of Proposition \ref{P1} and Theorem \ref{thm2.1} above we conclude easily
\begin{corollary}\label{thm2}
Let $(\mathbb{H}^3, \tilde{F}=\tilde{\alpha}^2/\tilde{\beta})$ be a Kropina space, where  $\tilde{\alpha}$ is given by  \eqref{eqn2.1.1.1}, $\tilde{\beta} = \frac{bd\tilde{x}^2}{(\tilde{x}^3)^2}$,  $b\ne 0$, is a $1$-form and $\varphi$ given by \eqref{eqn2.1.1}. Then $\varphi$ can never be minimal.
\end{corollary}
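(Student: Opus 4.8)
The plan is to repeat the argument of Theorem \ref{thm2.1} almost word for word, with the coordinates $\tilde x^1$ and $\tilde x^2$ exchanging roles.  First I would establish the analogue of Proposition \ref{P1} for the present $1$-form.  By Proposition \ref{prop1} the Busemann--Hausdorff volume of a Kropina $(\alpha,\beta)$-metric enters only through $\|\beta\|_\alpha^2$, so replacing $\tilde\beta=b\,d\tilde x^1/(\tilde x^3)^2$ by $\tilde\beta=b\,d\tilde x^2/(\tilde x^3)^2$ only changes the quantity $b'^2=b^2A^{\epsilon\eta}z^1_\epsilon z^1_\eta=\|\beta\|_\alpha^2$ of \eqref{eqn2.3.21} into $b'^2=b^2A^{\epsilon\eta}z^2_\epsilon z^2_\eta$.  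Consequently $\mathcal F(x,z)=2C^3/\widehat E$ with
\[
\widehat E=b^2\sum_{k=1}^{3}(-1)^{\gamma+\tau}z^k_{\tilde\gamma}z^k_{\tilde\tau}z^2_\gamma z^2_\tau ,
\]
and the minimality criterion is exactly \eqref{eqn2.3.19} with $E$ replaced throughout by $\widehat E$; the $C$-dependent terms there are untouched.

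Next I would compute $\widehat E$ together with the contractions of its first and second derivatives that appear in \eqref{eqn2.3.19}.  From \eqref{eqn2.3.28} the matrix $A$ is still \eqref{eqn2.3.29}, so $C^2=f^2(x^1)(1+f'^2(x^1))$ as in \eqref{eqn2.3.30}, while a short computation gives $\widehat E=b^2f^2(x^1)\big[f'^2(x^1)+\cos^2(x^2)\big]$, i.e.\ \eqref{eqn2.3.300} with $\sin(x^2)$ and $\cos(x^2)$ interchanged.  This exchange is forced by the shape of \eqref{eqn2.3.28}: the column $z^2_\epsilon$ is obtained from $z^1_\epsilon$ by the substitution $\cos(x^2)\mapsto\sin(x^2)$, $\sin(x^2)\mapsto-\cos(x^2)$, which is the effect of the angular reparametrization $x^2\mapsto x^2+\pi/2$.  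The normal field $v$ of \eqref{eqn2.3.290} and the $C$-identities \eqref{eqn2.3.36}, \eqref{eqn2.3.40}, \eqref{eqn2.3.43} are literally unchanged (none of them involves $\tilde\beta$), and the $\widehat E$-contractions replacing \eqref{eqn2.3.49}, \eqref{eqn2.3.41}, \eqref{eqn2.3.42} are obtained under the same substitution, the stray signs entering only in matched pairs and cancelling.

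Substituting these data into \eqref{eqn2.3.19} and simplifying, I expect to land on \eqref{eqn2.3.44} with $\cos^2(x^2)$ and $\sin^2(x^2)$ interchanged --- again a degree-two polynomial identity in $\cos^2(x^2)$ with three independent coefficients.  Comparing the coefficients of the distinct powers reproduces \eqref{eqn2.3.45} and \eqref{eqn2.3.46} (both free of $x^2$), and adding these yields $4\big(f'^2(x^1)+1\big)=0$, which has no real solution; hence $\varphi$ cannot be minimal.

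I do not expect a real obstacle here: the computation is structurally identical to that of Theorem \ref{thm2.1}, and the only point that needs care is carrying the $\sin\leftrightarrow\cos$ substitution through consistently and confirming that the coefficient comparison still gives an over-determined, inconsistent system for $f$.  In fact the statement can be seen with no computation at all: the rotation $(\tilde x^1,\tilde x^2,\tilde x^3)\mapsto(-\tilde x^2,\tilde x^1,\tilde x^3)$ is an isometry of $(\mathbb H^3,\tilde\alpha)$ that pulls $b\,d\tilde x^2/(\tilde x^3)^2$ back to $b\,d\tilde x^1/(\tilde x^3)^2$ and carries the surface of revolution \eqref{eqn2.1.1} to itself up to the reparametrization $x^2\mapsto x^2+\pi/2$.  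Since minimality is invariant under ambient isometries and under reparametrization of the domain, the corollary follows immediately from Theorem \ref{thm2.1}.
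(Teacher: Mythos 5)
Your proposal is correct, and its main computational plan is exactly what the paper has in mind: the paper gives no separate argument for this corollary beyond the remark that it follows by the strategy of Proposition \ref{P1} and Theorem \ref{thm2.1}, and in fact the statement is literally the $\theta=\pi/2$ case of Theorem \ref{thm2.2}, whose proof yields $E=b^2f^2(x^1)\left[f'^2(x^1)+\sin^2(x^2-\theta)\right]$ --- i.e.\ your $\widehat E=b^2f^2(x^1)\left[f'^2(x^1)+\cos^2(x^2)\right]$ at $\theta=\pi/2$ --- and then the pair \eqref{eqn2.4.69}--\eqref{eqn2.4.70}, whose sum is $4\left(f'^2(x^1)+1\right)=0$. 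One harmless bookkeeping quibble: since your identity is naturally a polynomial in $\cos^2(x^2-\pi/2)=\sin^2(x^2)$, the two ODEs come out as in \eqref{eqn2.4.69}--\eqref{eqn2.4.70} rather than verbatim \eqref{eqn2.3.45}--\eqref{eqn2.3.46}; the contradiction is of course the same. What is genuinely different, and nicer than anything in the paper, is your closing symmetry argument: the rotation $(\tilde x^1,\tilde x^2,\tilde x^3)\mapsto(-\tilde x^2,\tilde x^1,\tilde x^3)$ preserves $\tilde\alpha$, pulls $b\,d\tilde x^2/(\tilde x^3)^2$ back to $b\,d\tilde x^1/(\tilde x^3)^2$, and carries the surface of revolution \eqref{eqn2.1.1} to itself up to the shift $x^2\mapsto x^2\pm\pi/2$; since Shen's mean curvature is built from the induced volume functional, it is invariant under ambient isometries and domain reparametrizations, so the corollary follows from Theorem \ref{thm2.1} with no computation at all. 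That route buys brevity and makes the geometric reason for the equivalence of the $d\tilde x^1$ and $d\tilde x^2$ (and indeed all $\cos\theta\,d\tilde x^1+\sin\theta\,d\tilde x^2$) cases transparent, whereas the paper's computational route has the advantage of producing the explicit PDE/identity that is reused for the genuinely new case $\tilde\beta=b\,d\tilde x^3/(\tilde x^3)^2$ in Theorem \ref{thm2.3}.
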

Now we consider the Kropina space $(\mathbb{H}^3, \tilde{F}=\tilde{\alpha}^2/\tilde{\beta})$ where $\tilde{\alpha}$ is given by  \eqref{eqn2.1.1.1}, $\tilde{\beta} = \frac{b(\cos \theta d\tilde{x}^1+\sin \theta d\tilde{x}^2)}{(\tilde{x}^3)^2}$, ($b\ne 0$,~$0\leq\theta<2\pi$ are  constants), and obtain the following:
\begin{proposition}\label{ppn2}
Let $\varphi :M^2  \to (\mathbb{H}^3, \tilde{F}=\tilde{\alpha}^2/\tilde{\beta})$,  where $\tilde{\alpha}$ is given by  \eqref{eqn2.1.1.1} and $\tilde{\beta} = \frac{b(\cos \theta d\tilde{x}^1+\sin \theta d\tilde{x}^2)}{(\tilde{x}^3)^2}$,  ($b\ne 0$,~$0\leq\theta<2\pi$ are  constants)  be an immersion in the Kropina space with local coordinates $(\varphi^i(x^\epsilon))$. Then $\varphi$ is minimal if and only if \eqref{eqn2.3.19} holds, with 
\begin{equation}\label{eqn2.4.49}
\begin{split}
E=b^2\sum\limits_{k=1}^{3}(-1)^{\gamma + \tau}z^{k}_{\tilde{\gamma}}z^{k}_{\tilde{\tau}}\left[\cos^2\theta z^{1}_{\gamma}z^{1}_{\tau}+\sin^2\theta z^{2}_{\gamma}z^{2}_{\tau}+2\sin\theta \cos\theta z^{1}_{\gamma}z^{2}_{\tau}\right].
\end{split}
\end{equation}
\end{proposition}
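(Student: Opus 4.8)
The plan is to mimic the proof of Proposition \ref{P1} almost verbatim: replacing the $1$-form $\tilde\beta = b\,d\tilde x^1/(\tilde x^3)^2$ by $\tilde\beta = b(\cos\theta\,d\tilde x^1 + \sin\theta\,d\tilde x^2)/(\tilde x^3)^2$ changes only the $\beta$-part of the pulled-back metric, hence only the function $E$ entering $\mathcal F$, while the $\alpha$-part and everything downstream retains the same shape. First I would invoke Proposition \ref{ppn1} to record that $F = \varphi^*\tilde F$ is again a Kropina metric on $M^2$ with $\alpha^2 = A_{\epsilon\delta}\,dx^\epsilon dx^\delta$ (so $A$, and therefore $C = \sqrt{\det A}$, are exactly as in Theorem \ref{thm2.1}) and $\beta = b\,w_\eta\,dx^\eta$, where $w_\eta := \cos\theta\,z^1_\eta + \sin\theta\,z^2_\eta$. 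All of \eqref{eqn2.3.18}, \eqref{eqn2.3.201}, \eqref{eqn2.3.32}, \eqref{eqn2.3.25} and \eqref{eqn2.3.33}, which involve only $A$ and $C$, carry over unchanged.

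Next, since $\phi(s) = 1/s$ and $n = 2$ are unchanged, Proposition \ref{prop1} gives the same formula $dV_{BH} = \tfrac{2}{b'^2}\sqrt{\det A}\,dx$, now with $b'^2 = \|\beta\|^2_\alpha = b^2 A^{\epsilon\eta} w_\epsilon w_\eta$. Consequently $\operatorname{vol}(D^2_x(1)) = \operatorname{vol}(B^2(1))\,b'^2/(2C)$ and $\mathcal F(x,z) = 2C^3/E$, where $E := C^2 b'^2 = b^2(\det A)\,A^{\epsilon\eta} w_\epsilon w_\eta$. It then remains to check that this $E$ coincides with the right-hand side of \eqref{eqn2.4.49}. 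For a $2\times2$ matrix $(\det A)A^{\epsilon\eta}$ is the cofactor matrix, so $E = b^2\big(A_{22}w_1^2 - 2A_{12}w_1w_2 + A_{11}w_2^2\big)$; substituting $A_{\epsilon\delta} = \sum_k z^k_\epsilon z^k_\delta$ and rewriting with the sign convention $\tilde\tau = \delta_{\tau2} + 2\delta_{\tau1}$ gives $E = b^2\sum_k(-1)^{\gamma+\tau} z^k_{\tilde\gamma} z^k_{\tilde\tau}\,w_\gamma w_\tau$; expanding $w_\gamma w_\tau = \cos^2\theta\,z^1_\gamma z^1_\tau + \sin^2\theta\,z^2_\gamma z^2_\tau + \cos\theta\sin\theta(z^1_\gamma z^2_\tau + z^2_\gamma z^1_\tau)$ and symmetrising the mixed term inside the double sum yields \eqref{eqn2.4.49} exactly.

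Finally, because $\mathcal F$ has the same functional form $2C^3/E$ as in Proposition \ref{P1}, differentiating twice with respect to $z^i_\epsilon$ and then $z^j_\eta$ reproduces \eqref{eqn2.3.26} word for word (only the concrete $E$, and hence its first and second derivatives, differs), and substituting into the mean-curvature formula \eqref{eqn2.2.9} shows that $\varphi$ is minimal if and only if \eqref{eqn2.3.19} holds with $E$ as in \eqref{eqn2.4.49}. The proof is therefore essentially bookkeeping; the only step needing genuine care is the symmetrisation that collapses $\cos\theta\sin\theta(z^1_\gamma z^2_\tau + z^2_\gamma z^1_\tau)$ to $2\cos\theta\sin\theta\,z^1_\gamma z^2_\tau$ — legitimate precisely because the weight $(-1)^{\gamma+\tau} z^k_{\tilde\gamma} z^k_{\tilde\tau}$ is symmetric under $\gamma\leftrightarrow\tau$ — together with the bookkeeping needed to see that no new terms in $E$ spoil the structure exploited in \eqref{eqn2.3.26}.
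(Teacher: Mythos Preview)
Your proposal is correct and follows essentially the same route as the paper's proof: compute $b'^2=\|\beta\|_\alpha^2$, obtain $\mathcal F=2C^3/E$, and then invoke the computation of Proposition~\ref{P1} verbatim. You actually supply more detail than the paper does---in particular the cofactor identity $(\det A)A^{\epsilon\eta}w_\epsilon w_\eta = A_{22}w_1^2-2A_{12}w_1w_2+A_{11}w_2^2$ and the symmetrisation justifying the factor $2\sin\theta\cos\theta\,z^1_\gamma z^2_\tau$---whereas the paper simply states \eqref{eqn2.4.49} and writes ``by the similar calculations as in Proposition~\ref{P1} we complete the proof.''
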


\begin{proof}
In view of  \eqref{eqn2.3.21} we have
\begin{equation*}\label{eqn2.4.51}
\begin{split}
 dV_{BH} = \frac{2}{b'^2}\sqrt{\det A}dx,
\end{split}
\end{equation*}
where, $b'^2=b^2A^{\gamma\tau }\left[\cos^2\theta z^{1}_{\gamma}z^{1}_{\tau}+\sin^2\theta z^{2}_{\gamma}z^{2}_{\tau}+2z^{1}_{\gamma}z^{2}_{\tau}\sin\theta \cos\theta \right]=\| \beta \|^2_\alpha$, and the Euclidean volume of $D^2_x(1)$ is given by
  \begin{equation}\label{eqn2.4.52}
  vol (D^2_x(1)):=\frac{vol (B^2(1))b^2A^{\gamma\tau}\left[\cos^2\theta z^{1}_{\gamma}z^{1}_{\tau}+\sin^2\theta z^{2}_{\gamma}z^{2}_{\tau}+2\sin\theta \cos\theta z^{1}_{\gamma}z^{2}_{\tau}\right]}{2\sqrt{\det A}}.
  \end{equation}
 
Therefore, from \eqref{eqn2.2.6}, \eqref{eqn2.3.201}, \eqref{eqn2.4.49} and  \eqref{eqn2.4.52}   we have 
\begin{equation}\label{eqn2.4.54}
\mathcal{F}(x,z)=\frac{2C^3}{E}.
\end{equation}
Now by the similar calculations as in the Proposition \ref{P1} we complete the proof of the Proposition.
\end{proof}
\subsection*{Proof of Theorem \ref{thm2.2}}
\begin{proof}
As in Theorem \ref{thm2.1}, we consider $v :=(v^1,v^2,v^3)=(-\cos x^2,-\sin x^2, f'(x^1))$.\\ 
Now differentiating \eqref{eqn2.4.49} with respect to $z^i_{\epsilon}$ and  $z^j_{\eta}$ successively we have,
\begin{equation}\label{eqn2.4.60}
\begin{split}
\frac{\partial E}{\partial z^i_{\epsilon}}=2b^2 \cos^2 \theta \Bigg[z^1_{\tilde{\epsilon}}(-1)^{{\tilde{\epsilon}}+\tau}z^i_{\tilde{\tau}}z^1_{\tau}+\delta_{i1}\sum\limits_{k}^{}(-1)^{\epsilon +\tau}z^1_{\tau}z^k_{\tilde{\tau}}z^k_{\tilde{\epsilon}}\Bigg] \\+2b^2\sin^2 \theta \Bigg[z^2_{\tilde{\epsilon}}(-1)^{{\tilde{\epsilon}}+\tau}z^i_{\tilde{\tau}}z^2_{\tau}+\delta_{i2}\sum\limits_{k}^{}(-1)^{\epsilon +\tau}z^2_{\tau}z^k_{\tilde{\tau}}z^k_{\tilde{\epsilon}}\Bigg] \\+ 2b^2 \cos\theta \sin\theta \Bigg[z^1_{\tilde{\epsilon}}(-1)^{{\tilde{\epsilon}}+\tau} z^i_{\tilde{\tau}} z^2_{\tau} +z^2_{\tilde{\epsilon}}(-1)^{{\tilde{\epsilon}}+\tau} z^i_{\tilde{\tau}} z^1_{\tau}\\ + \delta_{i1}\sum\limits_{k}^{}(-1)^{\epsilon +\tau}z^2_{\tau}z^k_{\tilde{\tau}}z^k_{\tilde{\epsilon}} +\delta_{i2}\sum\limits_{k}^{}(-1)^{\epsilon +\tau}z^1_{\tau}z^k_{\tilde{\tau}}z^k_{\tilde{\epsilon}} \Bigg], 
\end{split}
\end{equation}
and
\begin{eqnarray}\label{eqn2.4.61}
\begin{split}
\frac{\partial^2 E}{\partial z^i_{\epsilon}\partial z^j_{\eta}}=2b^2 \cos^2\theta\Bigg[\delta_{j1}\delta_{\eta \tilde{\epsilon}}(-1)^{{\tilde{\epsilon}}+\tau}z^i_{\tilde{\tau}}z^1_{\tau} + z^1_{\tilde{\epsilon}} \left\lbrace (-1)^{\tilde{\epsilon}+\tilde{\eta}} \delta_{ij} z^1_{\tilde{\eta}} +(-1)^{\tilde{\epsilon}+\eta}\delta_{j1} z^i_{\tilde{\eta}} \right\rbrace \\ +\delta_{i1}\left\lbrace \sum\limits_{k}^{}(-1)^{\epsilon +\eta} \delta_{j1} z^k_{\tilde{\eta}} z^k_{\tilde{\epsilon}} +(-1)^{\epsilon +\tilde{\eta}}   z^1_{\tilde{\eta}} z^j_{\tilde{\epsilon}} + (-1)^{\epsilon +\tau} \delta_{\eta \tilde{\epsilon}} z^j_{\tilde{\tau}}z^1_{\tau} \right\rbrace \Bigg] \\ +2b^2 \sin^2\theta\Bigg[\delta_{j2}\delta_{\eta \tilde{\epsilon}}(-1)^{{\tilde{\epsilon}}+\tau}z^i_{\tilde{\tau}}z^2_{\tau} + z^2_{\tilde{\epsilon}} \left\lbrace (-1)^{\tilde{\epsilon}+\tilde{\eta}} \delta_{ij} z^2_{\tilde{\eta}} +(-1)^{\tilde{\epsilon}+\eta}\delta_{j2} z^i_{\tilde{\eta}} \right\rbrace \\ +\delta_{i2}\left\lbrace \sum\limits_{k}^{}(-1)^{\epsilon +\eta} \delta_{j2} z^k_{\tilde{\eta}} z^k_{\tilde{\epsilon}} +(-1)^{\epsilon +\tilde{\eta}}   z^2_{\tilde{\eta}} z^j_{\tilde{\epsilon}} + (-1)^{\epsilon +\tau} \delta_{\eta \tilde{\epsilon}} z^j_{\tilde{\tau}}z^2_{\tau} \right\rbrace \Bigg] \\ +2b^2 \cos\theta\sin\theta \Bigg[\delta_{j1}\delta_{\eta \tilde{\epsilon}}(-1)^{{\tilde{\epsilon}}+\tau}z^i_{\tilde{\tau}}z^2_{\tau} +z^1_{\tilde{\epsilon}} \left\lbrace (-1)^{\tilde{\epsilon}+\tilde{\eta}} \delta_{ij} z^2_{\tilde{\eta}} +(-1)^{\tilde{\epsilon}+\eta}\delta_{j2} z^i_{\tilde{\eta}} \right\rbrace \\ +\delta_{j2}\delta_{\eta \tilde{\epsilon}}(-1)^{{\tilde{\epsilon}}+\tau}z^i_{\tilde{\tau}}z^1_{\tau} +z^2_{\tilde{\epsilon}} \left\lbrace (-1)^{\tilde{\epsilon}+\tilde{\eta}} \delta_{ij} z^1_{\tilde{\eta}} +(-1)^{\tilde{\epsilon}+\eta}\delta_{j1} z^i_{\tilde{\eta}} \right\rbrace \\ +\delta_{i1} \left\lbrace \sum\limits_{k}^{}(-1)^{\epsilon +\eta} \delta_{j2} z^k_{\tilde{\eta}} z^k_{\tilde{\epsilon}} +(-1)^{\epsilon +\tilde{\eta}}   z^2_{\tilde{\eta}} z^j_{\tilde{\epsilon}} + (-1)^{\epsilon +\tau} \delta_{\eta \tilde{\epsilon}} z^j_{\tilde{\tau}}z^2_{\tau} \right\rbrace \\ + \delta_{i2} \left\lbrace \sum\limits_{k}^{}(-1)^{\epsilon +\eta} \delta_{j1} z^k_{\tilde{\eta}} z^k_{\tilde{\epsilon}} +(-1)^{\epsilon +\tilde{\eta}}   z^1_{\tilde{\eta}} z^j_{\tilde{\epsilon}} + (-1)^{\epsilon +\tau} \delta_{\eta \tilde{\epsilon}} z^j_{\tilde{\tau}}z^1_{\tau} \right\rbrace \Bigg].
\end{split}
\end{eqnarray}
For the surface of revolution under consideration given by  \eqref{eqn2.1.1},  \eqref{eqn2.4.49} is rewritten as
\begin{equation}\label{eqn2.4.590}
E=b^2f^2(x^1)\left[   f'^2(x^1)+ \sin^{2}\left( x^2-\theta\right)  \right].
\end{equation}
Contracting \eqref{eqn2.3.32} and  \eqref{eqn2.4.60} by $v^i$  and using \eqref{eqn2.3.28} we obtain,
\begin{equation}\label{eqn2.4.62}
\frac{\partial C}{\partial z^i_{\epsilon}}v^i=0,
\end{equation}
\begin{equation}\label{eqn2.4.63}
\begin{split}
\frac{\partial E}{\partial z^i_{\epsilon}}v^i= 2b^2f(x^1) \cos\left( x^2-\theta\right) \Big[ -\delta_{\epsilon 1}f(x^1)f'(x^1) \cos\left( x^2-\theta\right) +\delta_{\epsilon 2}\left\lbrace  1+f'^2\left( x^1\right)  \right\rbrace  \sin\left( x^2-\theta\right) \Big].
\end{split} 
\end{equation}
Using  \eqref{eqn2.3.28},  \eqref{eqn2.3.280} together with  \eqref{eqn2.3.32} and  \eqref{eqn2.4.60}, we have
\begin{equation}\label{eqn2.4.64}
\frac{\partial C}{\partial z^j_{\eta}}\frac{\partial^2\varphi^j} {\partial x^{\epsilon}\partial x^{\eta}}=\frac{f'(x^1)}{\sqrt{1+f'^2(x^1)}}\delta_{\epsilon 1}\left[ f(x^1)f''(x^1)+1+f'^2(x^1)\right],  
\end{equation}
\begin{equation}\label{eqn2.4.65}
\begin{split}
\frac{\partial E}{\partial z^j_{\eta}}\frac{\partial^2\varphi^j} {\partial x^{\epsilon}\partial x^{\eta}}= 2b^2f(x^1)\Big[\delta_{\epsilon 1} f'\left( x^1\right) \left\lbrace f(x^1)f''\left( x^1\right)+f'^2(x^1)+\sin\left( x^2-\theta\right)\right\rbrace \\+\delta_{\epsilon 2}f(x^1)\sin\left( x^2-\theta\right)\cos \left( x^2-\theta\right)\Big].
\end{split} 
\end{equation}
Note, from  \eqref{eqn2.3.28}, \eqref{eqn2.3.280}, \eqref{eqn2.3.33}, \eqref{eqn2.3.290} and  \eqref{eqn2.4.61} we obtain
\begin{equation}\label{eqn2.4.67}
\frac{1}{2}\frac{\partial^2 C^2}{\partial z^i_{\epsilon}\partial z^j_{\eta}}\frac{\partial^2\varphi^j} {\partial x^{\epsilon}\partial x^{\eta}}v^i= f(x^1)\left[ -f(x^1)f''(x^1)+1+f'^2(x^1)\right], 
\end{equation}
\begin{eqnarray}\label{eqn2.4.66}
\frac{\partial^2 E}{\partial z^i_{\epsilon}\partial z^j_{\eta}}\frac{\partial^2\varphi^j} {\partial x^{\epsilon}\partial x^{\eta}}v^i=  -2b^2f(x^1)\left[ f(x^1)f''\left( x^1\right) -\cos^2\left( x^2-\theta\right)\right].
\end{eqnarray}
Further, using  \eqref{eqn2.3.30} and   \eqref{eqn2.4.590}-\eqref{eqn2.4.66}, in  \eqref{eqn2.3.19}, we obtain 
\begin{equation}\label{eqn2.4.68}
\begin{split}
-3 \left( 1+2f'^2(x^1)\right) \left[  f\left( x^1\right)f''\left( x^1\right)+f'^2\left( x^1\right)+1\right] \cos^4 \left( x^2-\theta\right)\\+2\left( 1+f'^2(x^1)\right) \left[  2f(x^1)f''(x^1)-f'^2(x^1)\left\lbrace f(x^1)f''(x^1)+f'^2(x^1)+1\right\rbrace\right] \cos^2 \left( x^2-\theta\right)\\+ \left( 1+f'^2(x^1)\right)^2\left[ -f(x^1)f''(x^1)+3f'^2(x^1)+3\right]=0.
\end{split}
\end{equation}
 The equation \eqref{eqn2.4.68} is an identity in $\cos^2\left( x^2-\theta\right) $ and therefore comparing the coefficients of different powers of $\cos^2\left( x^2-\theta\right) $  we obtain,
\begin{equation}\label{eqn2.4.69}
         f(x^1)f''(x^1)+3f'^2(x^1)+3=0,
\end{equation}
and \begin{equation}\label{eqn2.4.70}
    -f(x^1)f''(x^1)+f'^2(x^1)+1=0.
\end{equation}
Adding  \eqref{eqn2.4.69} and  \eqref{eqn2.4.70}, we have
\begin{equation*}\label{eqn2.4.71}
4\left( f'^2(x^1)+1\right) =0.
\end{equation*}
Since $f'$ is a real valued function, we get a contradiction. Hence the proof follows.
\end{proof}

Finally, we consider the Kropina space $(\mathbb{H}^3,\tilde{F}=\tilde{\alpha}^2/\tilde{\beta})$, where $\tilde{\alpha}$ is the hyperbolic metric given by  \eqref{eqn2.1.1.1} and $\tilde{\beta} = \frac{bd\tilde{x}^3}{(\tilde{x}^3)^2}$, with  $(b\ne 0)$ and the minimal surface of revolution immersed in it.
\begin{proposition}
Let $\varphi :M^2 \to (\mathbb{H}^3, \tilde{F}=\tilde{\alpha}^2/\tilde{\beta})$, where $\tilde{\alpha}$ is given by  \eqref{eqn2.1.1.1} and  $\tilde{\beta} = \frac{bd\tilde{x}^3}{(\tilde{x}^3)^2}$, $b\ne 0$, be an immersion in the Kropina space with local coordinates $(\varphi^i(x^\epsilon))$. Then $\varphi$ is minimal if and only if  \eqref{eqn2.3.19} holds with 
\begin{equation}\label{eqn2.5.100}
E=b^2\sum\limits_{k=1}^{3}(-1)^{\gamma + \tau}z^{k}_{\tilde{\gamma}}z^{k}_{\tilde{\tau}}z^{3}_{\gamma}z^{3}_{\tau}. 
\end{equation}
\end{proposition}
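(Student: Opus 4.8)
The plan is to repeat, almost verbatim, the computation carried out in the proofs of Proposition \ref{P1} and Proposition \ref{ppn2}; the only new ingredient is that the deforming $1$-form now corresponds to the choice $l_i=\delta_{i3}$ in Proposition \ref{ppn1}. Consequently, by Proposition \ref{ppn1}, the pull-back metric $F=\varphi^*\tilde{F}$ is the Kropina metric on $M^2$ with $\alpha^2=A_{\epsilon\delta}\,dx^\epsilon dx^\delta$, where $A$ is as in \eqref{eqn2.3.18}, and $\beta=b\,z^3_\eta\,dx^\eta$.

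First I would compute the Busemann--Hausdorff volume form of $F$. Since for a Kropina metric $\phi(s)=1/s$ and here $n=2$, Proposition \ref{prop1} combined with the computation in \eqref{eqn2.3.21} gives $dV_{BH}=\frac{2}{\|\beta\|_\alpha^2}\sqrt{\det A}\,dx$, where $\|\beta\|_\alpha^2=b^2A^{\gamma\tau}z^3_\gamma z^3_\tau$. Thus, exactly as in \eqref{eqn2.3.22}, the Euclidean volume of the body $D^2_x(1)$ is $\mathrm{vol}(D^2_x(1))=\mathrm{vol}(B^2(1))\,b^2A^{\gamma\tau}z^3_\gamma z^3_\tau/(2\sqrt{\det A})$.

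Next I would use the elementary $2\times2$ cofactor identity $\det A\cdot A^{\gamma\tau}=(-1)^{\gamma+\tau}A_{\tilde\gamma\tilde\tau}$ (recall $\tilde1=2$, $\tilde2=1$), together with $A_{\epsilon\delta}=\sum_{k}z^k_\epsilon z^k_\delta$, to identify
\[
C^2\,b^2A^{\gamma\tau}z^3_\gamma z^3_\tau=b^2\sum_{k=1}^{3}(-1)^{\gamma+\tau}z^k_{\tilde\gamma}z^k_{\tilde\tau}z^3_\gamma z^3_\tau=E,
\]
with $C=\sqrt{\det A}$ as in \eqref{eqn2.3.201} and $E$ as in \eqref{eqn2.5.100}. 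Substituting this into the definition \eqref{eqn2.2.6} of $\mathcal{F}$ yields $\mathcal{F}(x,z)=2C^3/E$, in complete analogy with \eqref{eqn2.3.24} and \eqref{eqn2.4.54}. Finally, differentiating $\mathcal{F}=2C^3/E$ successively with respect to $z^i_\epsilon$ and $z^j_\eta$ by the product rule and \eqref{eqn2.3.25} reproduces formula \eqref{eqn2.3.26} (now with this $E$), and plugging \eqref{eqn2.3.26} into the mean-curvature expression \eqref{eqn2.2.9} and imposing $\mathcal{H}_\varphi\equiv0$ delivers \eqref{eqn2.3.19}.

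There is in fact no essential obstacle here: the argument is structurally identical to the two previous propositions, and the only point requiring a moment's attention is the cofactor bookkeeping showing $\|\beta\|_\alpha^2=E/C^2$ with $E$ of the stated form --- the same manipulation that produced the $E$ of \eqref{eqn2.3.20} and \eqref{eqn2.4.49} for the earlier choices of $\tilde\beta$, now with $l_i=\delta_{i3}$ in place of $l_i=\delta_{i1}$ or $l_i=(\cos\theta,\sin\theta,0)$. The underlying reason the proof goes through unchanged is that the induced volume ratio always has the form $\mathcal{F}=2C^3/E$, with only $E$ sensitive to the direction of deformation, so that the subsequent double differentiation and substitution into \eqref{eqn2.2.9} are purely formal.
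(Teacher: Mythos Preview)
Your proposal is correct and follows essentially the same approach as the paper's own proof: compute the Busemann--Hausdorff volume form via Proposition \ref{prop1}, identify $\mathcal{F}(x,z)=2C^3/E$ with $E$ as in \eqref{eqn2.5.100}, and then refer back to the differentiation already done in Proposition \ref{P1}. The only difference is that you spell out the cofactor identity $\det A\cdot A^{\gamma\tau}=(-1)^{\gamma+\tau}A_{\tilde\gamma\tilde\tau}$ explicitly to justify $\|\beta\|_\alpha^2=E/C^2$, whereas the paper leaves this step implicit.
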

\begin{proof}
In view of  \eqref{eqn2.3.21} the Busemann Hausdorff volume form is given by
\begin{equation*}\label{eqn2.5.101}
\begin{split}
 dV_{BH}=\frac{2}{b'^2}\sqrt{\det A}dx,
\end{split}
\end{equation*}
where, $b'^2=b^2A^{\epsilon\eta}z^3_{\epsilon}z^3_{\eta}=\|\beta\|^2_\alpha$. Hence, the Euclidean volume of $D^2_x(1)$ is given by
  \begin{equation}\label{eqn2.5.102}
  vol (D^2_x(1)):=\frac{vol (B^2(1))b^2A^{\epsilon\eta}z^3_{\epsilon}z^3_{\eta}}{2\sqrt{\det A}}.
  \end{equation}
  
Therefore, from  \eqref{eqn2.2.6}, \eqref{eqn2.3.201}, \eqref{eqn2.5.100} and  \eqref{eqn2.5.102}  we have,
\begin{equation}\label{eqn2.5.104}
\mathcal{F}(x,z)=\frac{2C^3}{E}.
\end{equation}
A similar computations as in  Proposition \ref{P1} completes the proof.
\end{proof}
\subsection*{Proof of Theorem \ref{thm2.3}}
\begin{proof}
\par As in the earlier case, we consider $v :=(v^1,v^2,v^3)=(-\cos x^2,-\sin x^2, f'(x^1))$.
Now differentiating  \eqref{eqn2.5.100} with respect to $z^i_{\epsilon}$ and with $z^j_{\eta}$ respectively, we get 
\begin{equation}\label{eqn2.5.108}
\frac{\partial E}{\partial z^i_{\epsilon}}=2b^2\Big[z^3_{\tilde{\epsilon}}\sum\limits_{\tau}^{}(-1)^{{\tilde{\epsilon}}+\tau}z^i_{\tilde{\tau}}z^3_{\tau}+\delta_{i3}\sum\limits_{\tau ,k}^{}(-1)^{\epsilon +\tau}z^3_{\tau}z^k_{\tilde{\tau}}z^k_{\tilde{\epsilon}}\Big],
\end{equation}
\begin{equation}\label{eqn2.5.109}
\begin{split}
\frac{\partial^2 E}{\partial z^i_{\epsilon}\partial z^j_{\eta}}=2b^2\Bigg[\delta_{j3}\delta_{\eta \tilde{\epsilon}}(-1)^{{\tilde{\epsilon}}+\tau}z^i_{\tilde{\tau}}z^3_{\tau}+z^3_{\tilde{\epsilon}}\left\lbrace(-1)^{{\tilde{\epsilon}}+{\tilde{\eta}}} \delta_{ij}z^3_{\tilde{\eta}}+(-1)^{{\tilde{\epsilon}}+\eta} \delta_{j3}z^i_{\tilde{\eta}}\right\rbrace \\ +\delta_{i3}\sum\limits_{k}^{}(-1)^{\epsilon+\eta}\delta_{j3}z^k_{\eta} z^k_{\tilde{\epsilon}}+\delta_{i3}(-1)^{\epsilon+{\tilde{\eta}}}z^3_{\tilde{\eta}}z^j_{\tilde{\epsilon}}+\delta_{i3}(-1)^{\epsilon+\tau} \delta_{\eta \tilde{\epsilon}}z^3_\tau z^j_{\tilde{\tau}}\Bigg].
\end{split}
\end{equation}
For the surface of revolution given in   \eqref{eqn2.1.1},   \eqref{eqn2.5.100} can be rewritten as 
\begin{equation}\label{eqn2.5.1070}
E=b^2f^2(x^1).
\end{equation}
Contracting  \eqref{eqn2.3.32} and \eqref{eqn2.5.108} by $v^i$ respectively, and using  \eqref{eqn2.3.28} we obtain
\begin{equation}\label{eqn2.5.110}
\frac{\partial C}{\partial z^i_{\epsilon}}v^i=0,
\end{equation}
\begin{equation}\label{eqn2.5.112}
\frac{\partial E}{\partial z^i_{\epsilon}}v^i= 2b^2\delta_{\epsilon 1}f^2(x^1)f'(x^1).
\end{equation}
Using  \eqref{eqn2.3.28},  \eqref{eqn2.3.280} together with  \eqref{eqn2.3.32}, \eqref{eqn2.5.108} we have
\begin{equation}\label{eqn2.5.113}
\frac{\partial C}{\partial z^j_{\eta}}\frac{\partial^2\varphi^j} {\partial x^{\epsilon}\partial x^{\eta}}=\frac{f'(x^1)}{\sqrt{1+f'^2(x^1)}}\delta_{\epsilon 1}\left[ f(x^1)f''(x^1)+1+f'^2(x^1)\right],  
\end{equation}
\begin{equation}\label{eqn2.5.114}
\frac{\partial E}{\partial z^j_{\eta}}\frac{\partial^2\varphi^j} {\partial x^{\epsilon}\partial x^{\eta}}= 2b^2\delta_{\epsilon 1}f(x^1)f'(x^1).
\end{equation}
Further, from  \eqref{eqn2.3.28}, \eqref{eqn2.3.280}, \eqref{eqn2.3.33}, \eqref{eqn2.3.290} and \eqref{eqn2.5.109}, we have
\begin{equation}\label{eqn2.5.116}
\frac{1}{2}\frac{\partial^2 C^2}{\partial z^i_{\epsilon}\partial z^j_{\eta}}\frac{\partial^2\varphi^j} {\partial x^{\epsilon}\partial x^{\eta}}v^i= f(x^1)\left[ -f(x^1)f''(x^1)+1+f'^2(x^1)\right],
\end{equation}
\begin{equation}\label{eqn2.5.115}
\frac{\partial^2 E}{\partial z^i_{\epsilon}\partial z^j_{\eta}}\frac{\partial^2\varphi^j} {\partial x^{\epsilon}\partial x^{\eta}}v^i=  2b^2f(x^1)\left[ 1+2f'^2(x^1)\right].
\end{equation}
Using  \eqref{eqn2.3.30}  and  \eqref{eqn2.5.110}-\eqref{eqn2.5.115} in  \eqref{eqn2.3.19}, and after some simplifications we obtain,
\begin{equation}\label{eqn2.5.1111}
\left(1+2f'^2(x^1)\right)\left\lbrace f'^2(x^1)+3f(x^1)f''(x^1) \right\rbrace -1 =0.
\end{equation}
The above differential equation has only one real solution and  is given by 
\begin{equation}\label{eqn2.5.111}
f(x^1)=\frac{x^1}{\sqrt{2}}+c,
\end{equation}
where $ c \in \mathbb{R}$ is a constant.
\par Without loss of generality we can assume $c=0$ in  \eqref{eqn2.5.111}. In this case we obtain the pullback Finsler metric as
\begin{equation}\label{eqn2.5.117}
F(x,y)=\frac{2(y^1)^2+(x^1)^2(y^2)^2} {by^1},
\end{equation}
 where $x=(x^1,x^2)\in M$ and $y=(y^1,y^2)\in TM \setminus\left\lbrace 0\right\rbrace$. The fundamental metric tensor of the Finsler metric $F$ is given by
 \begin{equation}\label{eqn2.5.118}
 g= (g_{ij})=
 \begin{pmatrix}
 \frac{\left\lbrace a^2 (y^1)^4+3\tilde{b}^2(y^2)^4\right\rbrace }{(y^1)^4} & \frac{-4\tilde{b}^2(y^2)^3}{(y^1)^3} \\
 \frac{-4\tilde{b}^2(y^2)^3}{(y^1)^3} &\frac {2\tilde{b}\left\lbrace a (y^1)^2+3\tilde{b}^2(y^2)^2\right\rbrace }{(y^1)^2}
 \end{pmatrix},
 \end{equation}
 where $a=\frac{2}{b}$ and $\tilde{b}=\frac{(x^1)^2}{b}$. Further, the inverse metric tensor $g^{-1}=(g^{ij})$ of $F$ is given by
 \begin{equation}\label{eqn2.5.119}
g^{-1}= (g^{ij})= \frac{1}{\left[ a(y^1)^2+\tilde{b}^(y^2)^2\right]^3 } 
 \begin{pmatrix}
 (y^1)^4\left\lbrace a (y^1)^2+3\tilde{b}(y^2)^2\right\rbrace  & 2\tilde{b}(y^1)^3 (y^2)^3 \\
 2\tilde{b}(y^1)^3 (y^2)^3 &\frac{(y^1)^2 }{2\tilde{b}}\left\lbrace a^2(y^1)^4+3\tilde{b}^2(y^2)^4\right\rbrace \\
 \end{pmatrix}.
 \end{equation}
 Using  \eqref{eqn2.1.12} the coefficients of the Riemannian curvature tensor are obtained as
 \begin{equation}\label{eqn2.5.121}
 \begin{split}
 R^1_1=\frac { -12(x^1)^2 (y^1)^4(y^2)^4}{\left[ 2(y^1)^2+(x^1)^2(y^2)^2\right]^3 }, \quad R^1_2=\frac { 12(x^1)^2 (y^1)^5(y^2)^3}{\left[ 2(y^1)^2+(x^1)^2(y^2)^2\right]^3 },\hspace{3.0cm} \\ R^2_1=\frac {-6(\left[ -2(y^1)^2+(x^1)^2(y^2)^2\right] (y^1)^3(y^2)^3}{\left[ 2(y^1)^2+(x^1)^2(y^2)^2\right]^3 }, \quad R^2_2=\frac { 6(\left[ -2(y^1)^2+(x^1)^2(y^2)^2\right] (y^1)^4(y^2)^2}{\left[ 2(y^1)^2+(x^1)^2(y^2)^2\right]^3 }. 
 \end{split}
 \end{equation}
 From  \eqref{eqn2.1.12}  the flag curvature of the surface with flag pole $y(\neq 0)$ is given by
 \begin{equation}\label{eqn2.5.122}
  K(x,y)=\frac { -6b^2 (y^1)^6(y^2)^2}{\left[ 2(y^1)^2+(x^1)^2(y^2)^2\right]^4 }.
 \end{equation}
 Therefore from  \eqref{eqn2.1.15} and   \eqref{eqn2.5.120}  we obtain,
 \begin{equation*}
 S(x,y)=\frac { -3x^1y^1(y^2)^2}{ 2(y^1)^2+(x^1)^2(y^2)^2 }.
 \end{equation*}
Using  \eqref{eqn2.1.14} we also obtain the spray coefficients of $F$ as
\begin{equation}\label{eqn2.5.120}
G^1=\frac { -x^1(y^1)^2(y^2)^2}{\left[ 2(y^1)^2+(x^1)^2(y^2)^2\right] }, \quad G^2=\frac {2(y^1)^3(y^2)}{x^1\left[ 2(y^1)^2+(x^1)^2(y^2)^2\right] }.
\end{equation}

The differential equations of the geodesics of the surface are given by
\begin{equation*}\label{eqn2.5.123}
\ddot{x}^1-\frac{2x^1(\dot{x}^1)^2(\dot{x}^2)^2}{2(\dot{x}^1)^2+(x^1)^2(\dot{x}^2)^2}=0,
\end{equation*}
and
\begin{equation*}\label{eqn2.5.124}
\ddot{x}^2+\frac{4(\dot{x}^1)^3(\dot{x}^2)}{x^1[2(\dot{x}^1)^2+(x^1)^2(\dot{x}^2)^2]}=0.
\end{equation*}
It is evident that $x^1= c_1t+c_2,~ x^2=k_1$ and $x^1= k_2,~ x^2=c_3t+c_4$ are solutions of the  above equations.
\end{proof}
\begin{rem}
\textnormal{First three authors of the present paper classified the minimal surfaces of revolution  in Kropina space obtained by deformation of Euclidean metric with a $1$-form \cite{RGAKBT}. However we realise that there is a mistake in the differential Equation $(25)$ of that paper. Fortunately we find the same differential equation (corrected version of Equation $(25)$ in \cite{RGAKBT}) in  \eqref{eqn2.5.1111} of the present paper and the solution of the equation is obtained too.}
\end{rem}
\subsection*{Acknowledgement}
 The first author is supported by UGC Senior Research Fellowship, India with reference no. 1076/(CSIR-UGC NET JUNE 2017) and second author is supported by CSIR Research Associate Fellowship with file number 09/0013(011312)/2021-EMR-I.

 \end{document}